\DeclareMathOperator{\aut}{Aut}
\DeclareMathOperator{\cay}{Cay}
\DeclareMathOperator{\cyc}{Cyc}
\DeclareMathOperator{\iso}{Iso}
\DeclareMathOperator{\orb}{Orb}
\DeclareMathOperator{\rk}{rk}
\DeclareMathOperator{\Span}{Span}
\DeclareMathOperator{\sym}{Sym}
\DeclareMathOperator{\rad}{rad}
\DeclareMathOperator{\alg}{Alg}
\def\@seccntformat#1{\csname the#1\endcsname. } 
\def\@biblabel#1{#1.}
\title{Separability of Schur rings over an abelian group of order~$4p$}
\author{Grigory Ryabov}
\address{Sobolev Institute of Mathematics, Novosibirsk, Russia}
\address{Novosibirsk State University, Novosibirsk, Russia}
\email{gric2ryabov@gmail.com}
\thanks{The work is supported by the Russian Foundation for Basic Research (project 18-31-00051)}
\date{}
\newtheorem{prop}{Proposition}[section]
\newtheorem*{theo1}{Theorem 1}
\newtheorem{lemm}[prop]{Lemma}
\newtheorem*{corl1}{Corollary}
\theoremstyle{definition}
\begin{document}

\vspace{\baselineskip}
\vspace{\baselineskip}

\vspace{\baselineskip}

\vspace{\baselineskip}

\begin{abstract}
An $S$-ring (a Schur ring) is said to be \emph{separable} with respect to a class of groups $\mathcal{K}$ if every  its algebraic isomorphism to an $S$-ring over a group from $\mathcal{K}$ is induced by a combinatorial isomorphism. We prove that every Schur ring over an abelian group $G$ of order $4p$, where $p$ is a prime, is separable with respect to the class of abelian groups. This implies that the Weisfeiler-Leman dimension of the class of Cayley graphs over~$G$ is at most~2.
\\
\\
\textbf{Keywords}: Schur rings, Cayley graphs, Cayley graph isomorphism problem.
\\
\textbf{MSC}:05E30, 05C60, 20B35.
\end{abstract}

\maketitle

\section{Introduction}

An \emph{$S$-ring} (a \emph{Schur ring}) over a finite group $G$ is defined to be a subring of the integer group ring $\mathbb{Z}G$ that is a free $\mathbb{Z}$-module spanned by a partition of $G$ closed under taking inverse and containing the identity element  of $G$ as a class (for exact definitions see Section~$2$). The elements of this partition  are called the \emph{basic sets} of the $S$-ring. The theory of $S$-rings was initiated by Schur \cite{Schur} and later developed by Wielandt \cite{Wi}. For more details on $S$-rings see~\cite{MP}.

Let $\mathcal{A}$ and $\mathcal{A}^{'}$ be $S$-rings over groups $G$ and $G^{'}$ respectively. \emph{A (combinatorial) isomorphism } from $\mathcal{A}$ to $\mathcal{A}^{'}$  is defined to be a bijection  $f:G\rightarrow G^{'}$ such that for every basic set $X$ of $\mathcal{A}$ the set $X^{'}=X^f$ is a basic set of $\mathcal{A}^{'}$ and $f$ is an isomorphism of the Cayley graphs $\cay(G,X)$ and 
$\cay(G^{'},X^{'})$. \emph{An algebraic isomorphism} from  $\mathcal{A}$ to $\mathcal{A}^{'}$ is defined to be a ring isomorphism of them  inducing the bijection between the basic sets of $\mathcal{A}$ and  the basic sets of $\mathcal{A}^{'}$. One can check that every combinatorial isomorphism induces an algebraic isomorphism. However, not every algebraic isomorphism is induced by a combinatorial one (see~\cite{EP1}).

Let $\mathcal{K}$ ba a class of groups. An $S$-ring is said to be \emph{separable} with respect to $\mathcal{K}$ if every algebraic isomorphism from it to an $S$-ring over a group from $\mathcal{K}$ is induced by a combinatorial isomorphism. A separable $S$-ring is determined up to isomorphism only by the tensor of its structure constants. A finite group  is said to be \emph{separable} with respect to $\mathcal{K}$ if every $S$-ring over this group is separable with respect to $\mathcal{K}$. Denote the classes of cyclic and abelian groups by $\mathcal{K}_C$ and $\mathcal{K}_A$ respectively. In~\cite{EP5} it was proved that cyclic $p$-groups are separable with respect to $\mathcal{K}_C$. On the other hand, there are examples of cyclic groups which are not separable with respect to $\mathcal{K}_C$~\cite{EP1}. Denote the cyclic group of order~$n$ by $C_n$. The results obtained in~\cite{Ry} imply that the groups $C_{p^k}$ and $C_p\times C_{p^k}$, where $p\in\{2,3\}$ and $k\geq 1$, are separable with respect to $\mathcal{K}_A$. However, the classification of all  separable groups is far from complete. In fact, only the above  families of groups are known infinite families of separable groups.

In this paper we study $S$-rings and abelian groups which are separable with respect to $\mathcal{K}_A$. Throughout the paper we write for short ``separable''   instead ``separable with respect to $\mathcal{K}_A$''. The main result of the paper is given in the theorem below.

\begin{theo1}\label{main}
An abelian group of order $4p$ is separable for every prime $p$.
\end{theo1}

Let $G$ be an abelian group of order $4p$, where $p$ is a prime. Then $p=2$ and $G\cong C_4 \times C_2$, or $G\cong C_{4p}$, or $G\cong C_2\times C_2\times C_p$. If $G\cong C_4 \times C_2$ then Theorem~\ref{main} follows from~\cite[Theorem 1]{Ry}. The proof of Theorem~\ref{main} for other groups is based on the description of $S$-rings over $G$ that was obtained in \cite{EP3} for $G\cong C_{4p}$ and in \cite{EKP} for $G\cong C_2\times C_2\times C_p$. We give this description in a form convenient for us in Section~$3$.

A motivation for being interested in  separable  groups comes from the problem of testing isomorphism of Cayley graphs. If a group~$G$ is separable then  the isomorphism problem for Cayley graphs over~$G$ can be solved efficiently by using the Weisfeiler-Leman algorithm  \cite{WeisL}. In the sense of~\cite{KPS} this means that the Weisfeiler-Leman dimension of the class of Cayley graphs over~$G$ is at most~2. For more details see also \cite[Section~6.2]{EP2} and \cite[Section~8]{Ry}.

\begin{corl1}\label{corollary}
Let $p$ be a prime, $G$ an abelian group of order $4p$, and $\mathcal{G}$ the class of Cayley graphs over~$G$. Then the Weisfeiler-Leman dimension of $\mathcal{G}$ is at most~2. 
%Other words, given two graphs from $\mathcal{G}_n$ one can check in time  $\poly(n)$ by using the Weisfeiler-Leman algorithm  whether these graphs are isomorphic.
%Suppose that an abelian group $G$ of order $n=4p$, where $p$ is a prime, is given explicitly. Then  for every Cayley graph  $\Gamma$ over  $G$ and  every Cayley graph $\Gamma^{'}$ over an arbitrary explicitly given  abelian group one can check in time  $\poly(n)$  whether $\Gamma$ and $\Gamma^{'}$ are isomorphic.
\end{corl1}

\begin{proof}
Follows from \cite[Proposition~8.1]{Ry} and Theorem~\ref{main}.
\end{proof}

It should be mentioned that the recognition and the isomorphism problems for Cayley graphs over an abelian group of order $4p$, where $p$ is a prime, were solved in~\cite{NP}.\medskip

{\bf Notation.}

As usual by $\mathbb{Z}$ we denote the ring of rational integers.

The projections of $X\subseteq A\times B$ to $A$ and $B$ are denoted by $X_A$ and $X_B$, respectively.

The set of non-identity elements of a group $G$ is denoted by  $G^\#$.

Let $X\subseteq G$. The element $\sum_{x\in X} {x}$ of the group ring $\mathbb{Z}G$ is denoted by $\underline{X}$.

The order of $g\in G$ is denoted by $|g|$.

The set $\{x^{-1}:x\in X\}$ is denoted by $X^{-1}$.

The subgroup of $G$ generated by $X$ is denoted by $\langle X\rangle$; we also set $\rad(X)=\{g\in G:\ gX=Xg=X\}$.

If  $m\in \mathbb{Z}$ then the set $\{x^m: x \in X\}$ is denoted by $X^{(m)}$.

The set of edges of the Cayley graph $\cay(G,X)$ is denoted by $R(X)$.

The group of all permutations of $G$ is denoted by $\sym(G)$.

The subgroup of $\sym(G)$ induced by right multiplications of $G$ is denoted by $G_{right}$.

For a set $\Delta\subseteq \sym(G)$ and a section $S=U/L$ of $G$ we set 
$$\Delta^S=\{f^S:~f\in \Delta,~S^f=S\},$$
where $S^f=S$ means that $f$ permutes the $L$-cosets in $U$ and $f^S$ denotes the bijection of $S$ induced by $f$.

If a group $K$ acts on a set $X$ then the set of all orbtis of $K$ on $X$ is denoted by $\orb(K,X)$.

The cyclic group of order $n$ is denoted by  $C_n$.

\section{Preliminaries}

In this section we use the notation and terminology from paper~\cite{Ry}, where the most part
of the material is contained.

\subsection{Definitions}
Let $G$ be a finite group and $\mathbb{Z}G$  the integer group ring. Denote the identity element of $G$ by $e$.  A subring  $\mathcal{A}\subseteq \mathbb{Z} G$ is called an \emph{$S$-ring} over $G$ if there exists a partition $\mathcal{S}=\mathcal{S}(\mathcal{A})$ of~$G$ such that:

$(1)$ $\{e\}\in\mathcal{S}$,

$(2)$  if $X\in\mathcal{S}$ then $X^{-1}\in\mathcal{S}$,

$(3)$ $\mathcal{A}=\Span_{\mathbb{Z}}\{\underline{X}:\ X\in\mathcal{S}\}$.

The elements of $\mathcal{S}$ are called the \emph{basic sets} of  $\mathcal{A}$ and the number $|\mathcal{S}|$ is called the \emph{rank} of~$\mathcal{A}$. If $X,Y,Z\in\mathcal{S}$ then   the number of distinct representations of $z\in Z$ in the form $z=xy$ with $x\in X$ and $y\in Y$ is denoted by $c^Z_{X,Y}$. Note that if $X$ and $Y$ are basic sets of $\mathcal{A}$ then $\underline{X}~\underline{Y}=\sum_{Z\in \mathcal{S}(\mathcal{A})}c^Z_{X,Y}\underline{Z}$. Therefore the numbers  $c^Z_{X,Y}$ are structure constants of $\mathcal{A}$ with respect to the basis $\{\underline{X}:\ X\in\mathcal{S}\}$.

A set $X \subseteq G$ is called an \emph{$\mathcal{A}$-set} if $\underline{X}\in \mathcal{A}$. A subgroup $H \leq G$ is called an \emph{$\mathcal{A}$-subgroup} if $H$ is an $\mathcal{A}$-set. With each $\mathcal{A}$-set $X$ one can naturally associate two $\mathcal{A}$-subgroups, namely $\langle X \rangle$ and $\rad(X)$. Let $L \unlhd U\leq G$. A section $U/L$ is called an \emph{$\mathcal{A}$-section} if $U$ and $L$ are $\mathcal{A}$-subgroups. If $S=U/L$ is an $\mathcal{A}$-section then the module
$$\mathcal{A}_S=Span_{\mathbb{Z}}\left\{\underline{X}^{\pi}:~X\in\mathcal{S}(\mathcal{A}),~X\subseteq U\right\},$$
where $\pi:U\rightarrow U/L$ is the canonical epimorphism, is an $S$-ring over $S$.

Let $K \leq \aut(G)$. Then  $\orb(K,G)$ forms a partition of  $G$ that defines an  $S$-ring $\mathcal{A}$ over $G$.  In this case  $\mathcal{A}$ is called \emph{cyclotomic} and denoted by $\cyc(K,G)$.

\subsection{Isomorphisms of $S$-rings}

Let $\mathcal{A}$ and $\mathcal{A}^{'}$ be $S$-rings over groups $G$  and  $G^{'}$ respectively. If there exists an isomorphism from  $\mathcal{A}$  to $\mathcal{A}^{'}$ we write $\mathcal{A}\cong\mathcal{A}^{'}$.  The group $\iso(\mathcal{A})$ of all isomorphisms from $\mathcal{A}$ onto itself has a normal subgroup
$$\aut(\mathcal{A})=\{f\in \iso(\mathcal{A}): R(X)^f=R(X)~\text{for every}~X\in \mathcal{S}(\mathcal{A})\}.$$
This subgroup is called the \emph{automorphism group} of $\mathcal{A}$. Note that $\aut(\mathcal{A})\geq G_{right}$. If $S$ is an $\mathcal{A}$-section then, obviously, $\aut(\mathcal{A})^S\leq\aut(\mathcal{A}_S)$. 

An \emph{algebraic isomorphism} from $\mathcal{A}$  to $\mathcal{A}^{'}$ is, in fact, a ring isomorphism of them. However, we define an algebraic isomorphism of $S$-rings in the following way which is more convenient for us. A bijection $\varphi:\mathcal{S}(\mathcal{A})\rightarrow\mathcal{S}(\mathcal{A}^{'})$ is said to be an \emph{algebraic isomorphism} from $\mathcal{A}$  to $\mathcal{A}^{'}$ if 
$$c_{X,Y}^Z=c_{X^{\varphi},Y^{\varphi}}^{Z^{\varphi}}$$ 
for all $X,Y,Z\in \mathcal{S}(\mathcal{A})$. The mapping $\underline{X}\rightarrow \underline{X}^{\varphi}$ is extended by linearity to the ring isomorphism of $\mathcal{A}$  and $\mathcal{A}^{'}$. If there exists an algebraic isomorphism from  $\mathcal{A}$  to $\mathcal{A}^{'}$ we write $\mathcal{A}\cong_{\alg}\mathcal{A}^{'}$. Every isomorphism $f$ of $S$-rings  preserves  the structure constants  and hence $f$ induces the algebraic isomorphism $\varphi_f$.

Let $\varphi:\mathcal{A}\rightarrow \mathcal{A}^{'}$ be an algebraic isomorphism. It is easy to see that $\varphi$ is extended to a bijection between  $\mathcal{A}$- and $\mathcal{A}^{'}$-sets and hence between  $\mathcal{A}$- and $\mathcal{A}^{'}$-sections. The images of an $\mathcal{A}$-set $X$ and an $\mathcal{A}$-section $S$ under the action of $\varphi$ are denoted by $X^{\varphi}$ and $S^{\varphi}$ respectively. If $S$ is an $\mathcal{A}$-section then  $\varphi$ induces the algebraic isomorphism $\varphi_S:\mathcal{A}_S\rightarrow \mathcal{A}^{'}_{S^{'}}$, where $S^{'}=S^{\varphi}$. The above bijection between the $\mathcal{A}$- and $\mathcal{A}^{'}$-sets is, in fact, an isomorphism of the corresponding lattices. It follows that
$$ \langle X^{\varphi} \rangle = \langle X \rangle ^{\varphi}~\text{and}~\rad(X^{\varphi})=\rad(X)^{\varphi}~$$for every  $\mathcal{A}$-set $X$. Since $c^{\{e\}}_{X,Y}=\delta_{Y,X^{-1}}|X|$ and $|X|=c^{\{e\}}_{X,X^{-1}}$ , where $X,Y\in \mathcal{S}(\mathcal{A})$ and $\delta_{X,X^{-1}}$ is the Kronecker delta, we conclude that $(X^{-1})^{\varphi}=(X^{\varphi})^{-1}$ and $|X|=|X^{\varphi}|$ for every $\mathcal{A}$-set $X$. In particular, $|G|=|G^{'}|$.

\begin{lemm}\cite[Lemma 2.1]{EP5}\label{uniq}
Let $\mathcal{A}$ and $\mathcal{A}^{'}$ be $S$-rings over groups $G$ and $G^{'}$ respectively. Let $\mathcal{B}$ be the $S$-ring generated by $\mathcal{A}$ and an element $\xi\in \mathbb{Z}G$ and $\mathcal{B}^{'}$  the $S$-ring generated by $\mathcal{A}^{'}$ and an element $\xi^{'}\in \mathbb{Z}G^{'}$.  Then given algebraic isomorphism  $\varphi:\mathcal{A}\rightarrow \mathcal{A}^{'}$ there is at most one algebraic isomorphism $\psi:\mathcal{B}\rightarrow \mathcal{B}^{'}$ extending $\varphi$ and such that $\xi^{\psi}=\xi^{'}$.
\end{lemm}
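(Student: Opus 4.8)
The statement asks only for \emph{uniqueness}, so the plan is to show that any algebraic isomorphism $\psi\colon\mathcal{B}\to\mathcal{B}^{'}$ that extends $\varphi$ and sends $\xi$ to $\xi^{'}$ is completely determined by these two pieces of data. The engine behind this is that an algebraic isomorphism, although defined merely as a structure-constant-preserving bijection of basic sets, automatically respects several further operations on the group ring. First I would record that $\psi$, extended by linearity, is a ring isomorphism, so it commutes with the module operations and with the ordinary (convolution) product. Next I would verify that it commutes with the \emph{Hadamard} (componentwise) product $\circ$: since distinct basic sets are disjoint, $\underline{X}\circ\underline{Y}=\delta_{X,Y}\,\underline{X}$ for $X,Y\in\mathcal{S}(\mathcal{B})$, and because $\psi$ is a bijection of basic sets one has $X^{\psi}=Y^{\psi}\iff X=Y$, whence $\psi(\underline{X})\circ\psi(\underline{Y})=\delta_{X,Y}\,\underline{X^{\psi}}=\psi(\underline{X}\circ\underline{Y})$; linearity extends this to all of $\mathcal{B}$. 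Finally, the relation $(X^{-1})^{\psi}=(X^{\psi})^{-1}$, already noted in the excerpt and valid verbatim for $\mathcal{B}$-sets, shows that $\psi$ commutes with the ``bar'' map $\sum_g a_g\,g\mapsto\sum_g a_g\,g^{-1}$.

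The second ingredient is a description of $\mathcal{B}$ as a \emph{set}, not merely as an abstract ring. I would use the standard fact that the $S$-ring generated by $\mathcal{A}$ and $\xi$ coincides with the smallest subset of $\mathbb{Z}G$ that contains $e$, $\underline{G}$, all $\underline{X}$ with $X\in\mathcal{S}(\mathcal{A})$, and $\xi$, and is closed under $+$, $-$, the two products $\cdot$ and $\circ$, and the bar map. Indeed, any subalgebra of $\mathbb{Z}G$ containing $e$ and $\underline{G}$ and closed under $\circ$ and bar is automatically spanned by the indicator vectors $\underline{X}$ of a partition of $G$ obeying the $S$-ring axioms (the minimal nonzero idempotents for $\circ$ being exactly the basic sets); this is the group-ring incarnation of the Weisfeiler--Leman closure. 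Consequently every element of $\mathcal{B}$ is obtained from the listed generators by finitely many applications of $+,-,\cdot,\circ$ and bar.

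Putting the two ingredients together finishes the argument. Suppose $\psi_1,\psi_2\colon\mathcal{B}\to\mathcal{B}^{'}$ are algebraic isomorphisms, both extending $\varphi$ and both sending $\xi$ to $\xi^{'}$. On each generator listed above they agree: on $e$, $\underline{G}$ and the $\underline{X}$ with $X\in\mathcal{S}(\mathcal{A})$ because both extend $\varphi$, and on $\xi$ by hypothesis. Since, by the first paragraph, both maps commute with $+,-,\cdot,\circ$ and bar, they must agree on every element produced from the generators, hence on all of $\mathcal{B}$; therefore $\psi_1=\psi_2$, which is exactly the asserted uniqueness. (Note that the analogous generation of $\mathcal{B}^{'}$ by $\mathcal{A}^{'}$ and $\xi^{'}$ is given but is not actually needed for this direction.)

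I expect the main obstacle to be the clean justification of the generation statement in the second paragraph: that $\mathcal{B}$ is exactly the closure of its generators under $\cdot$, $\circ$ and bar, and in particular that such a closure is already $\mathbb{Z}$-spanned by the $0/1$ vectors forming an $S$-ring partition. Verifying that an algebraic isomorphism respects the Hadamard product is conceptually the key point, since it is what upgrades an abstract ring isomorphism into something rigid enough to be unique, but it is short; the bookkeeping around the closure operations is where care is needed.
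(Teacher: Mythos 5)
The paper offers no proof of this lemma---it is imported from \cite[Lemma 2.1]{EP5}---so there is nothing in the text to compare against; your argument is the standard one and, in essence, the one used in the cited source. The two pillars are sound: an algebraic isomorphism, being a bijection of basic sets preserving structure constants, does commute with the ordinary product, the Hadamard product and the inversion map, and two extensions of $\varphi$ sending $\xi$ to $\xi^{'}$ therefore agree on everything generated from $\mathcal{S}(\mathcal{A})\cup\{\xi\}$ by those operations.

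The one genuine inaccuracy is the generation claim in your second paragraph. Over $\mathbb{Z}$ the closure $C$ of $\{e,\underline{G}\}\cup\{\underline{X}:X\in\mathcal{S}(\mathcal{A})\}\cup\{\xi\}$ under $+,-,\cdot,\circ$ and the bar map is in general a \emph{proper} subset of $\mathcal{B}$: it is a subring closed under all the listed operations, but it need not be a pure $\mathbb{Z}$-submodule, and the interpolation that extracts level sets of an element only yields integer \emph{multiples} $N\underline{Z}$ of the indicators, not the indicators themselves. Concretely, for $G=C_3=\langle g\rangle$, $\mathcal{A}$ of rank~$2$ and $\xi=g-g^{2}$ one has $\mathcal{B}=\mathbb{Z}G$, while every element of $C$ has $g$- and $g^{2}$-coefficients of the same parity, so $g\notin C$. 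The repair is cheap: one shows instead that $\mathcal{B}=\mathbb{Q}C\cap\mathbb{Z}G$ (the $\mathbb{Q}$-span of $C$ intersected with $\mathbb{Z}G$ is an $S$-ring containing $\mathcal{A}$ and $\xi$, and it is contained in $\mathbb{Q}\mathcal{B}\cap\mathbb{Z}G=\mathcal{B}$), so that for every $Z\in\mathcal{S}(\mathcal{B})$ some $N\underline{Z}$ with $N\neq 0$ lies in $C$. Since $\psi_{1}$ and $\psi_{2}$ agree on the generators and commute with the operations, they agree on $C$, hence on $N\underline{Z}$; therefore $N\underline{Z^{\psi_{1}}}=N\underline{Z^{\psi_{2}}}$ and $Z^{\psi_{1}}=Z^{\psi_{2}}$ for every basic set $Z$, and as an algebraic isomorphism is determined by its action on basic sets, $\psi_{1}=\psi_{2}$. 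With this adjustment your proof is complete.
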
	

Note that  for every group $G$ the $S$-ring of rank~2 over $G$ and $\mathbb{Z}G$ are separable with respect to the class of all groups. In the former case every basic set is singleton and hence every algebraic isomorphism is induced by an isomorphism in a natural way. In the latter case  there exists the unique algebraic isomorphism from the $S$-ring of rank~2 over $G$ to the $S$-ring of rank~2 over a given  group and this algebraic isomorphism is induced by every isomorphism.

A \emph{Cayley isomorphism} from  $\mathcal{A}$  to $\mathcal{A}^{'}$   is defined to be a group isomorphism $f:G\rightarrow G^{'}$ such that $\mathcal{S}(\mathcal{A})^f=\mathcal{S}(\mathcal{A}^{'})$. If there exists a Cayley isomorphism from  $\mathcal{A}$  to $\mathcal{A}^{'}$ we write $\mathcal{A}\cong_{\cay}\mathcal{A}^{'}$. Every Cayley isomorphism is a (combinatorial) isomorphism, however the converse statement is not true.

\subsection{Multiplier theorems}

Sets $X,Y\subseteq G$ are called \emph{rationally conjugate} if there exists $m\in \mathbb{Z}$ coprime to $|G|$ such that $Y=X^{(m)}$. The next two statements are known as the Schur theorems on multipliers (see \cite[Theorem~23.9]{Wi}).

\begin{lemm} \label{burn}
Let $\mathcal{A}$ be an $S$-ring over an abelian group  $G$. Then $X^{(m)}\in \mathcal{S}(\mathcal{A})$  for every  $X\in \mathcal{S}(\mathcal{A})$ and every  $m\in \mathbb{Z}$ coprime to $|G|$. Other words, every central element of $\aut(G)$  is a Cayley isomorphism from   $\mathcal{A}$ onto itself.
\end{lemm}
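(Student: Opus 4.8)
The plan is to reduce to the case of a prime multiplier and then exploit the Frobenius endomorphism of the group algebra over~$\mathbb{F}_p$. First I would observe that, since $G$ is abelian, the power map $\sigma_m\colon g\mapsto g^m$ is a group automorphism of~$G$ whenever $\gcd(m,|G|)=1$ (abelianness is essential here), and that $\sigma_{m_1m_2}=\sigma_{m_1}\sigma_{m_2}$. Every prime divisor of such an~$m$ is itself coprime to~$|G|$, so for the analytic heart of the argument it suffices to treat a prime $p\nmid|G|$; the general case will follow by composing the corresponding permutations of~$G$. I would also record, for the reformulation at the end, that $\sigma_m$ commutes with every element of $\aut(G)$ and is therefore central.

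Next I would pass to $\mathbb{F}_pG$ via the reduction homomorphism $\rho\colon\mathbb{Z}G\to\mathbb{F}_pG$. The image $\rho(\mathcal{A})$ is an $\mathbb{F}_p$-subalgebra, hence closed under the Frobenius map $\xi\mapsto\xi^p$. Because $G$ is abelian and we are in characteristic~$p$, all mixed multinomial coefficients vanish and Fermat's little theorem handles the surviving coefficients, so $\underline{X}^p\equiv\underline{X^{(p)}}\pmod{p\mathbb{Z}G}$; I would spell out this ``freshman's dream'' only to this extent. Applying $\rho$ gives $\rho(\underline{X})^p=\rho(\underline{X^{(p)}})$, and since $\rho(\mathcal{A})$ is a ring this yields $\rho(\underline{X^{(p)}})\in\rho(\mathcal{A})$.

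The decisive step is to descend this membership modulo~$p$ to an honest statement about basic sets, and this is where I expect the only real subtlety. Since $\mathcal{A}$ is the $\mathbb{Z}$-span of $\{\underline{Y}:Y\in\mathcal{S}(\mathcal{A})\}$, its image $\rho(\mathcal{A})$ is the $\mathbb{F}_p$-span of the classes $\rho(\underline{Y})$, and these have pairwise disjoint supports, hence are linearly independent. Writing $\rho(\underline{X^{(p)}})=\sum_Y\bar{c}_Y\,\rho(\underline{Y})$ and comparing coefficients with the fact that $\underline{X^{(p)}}$ is a genuine $0/1$ vector, I would conclude that the indicator of $X^{(p)}$ is constant on each basic set~$Y$; hence $X^{(p)}$ is a union of basic sets. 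The key realization is that this avoids any integral lifting of coefficients: it is precisely the $0/1$ nature of $\underline{X^{(p)}}$ that makes the congruence modulo~$p$ sufficient.

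Finally I would upgrade ``union of basic sets'' to ``single basic set.'' By composing the maps $\sigma_q$ over the prime factors (all coprime to~$|G|$) of an arbitrary~$m$ with $\gcd(m,|G|)=1$, it follows that $X^{(m)}$ is a union of basic sets for every such~$m$. Choosing $m'$ with $mm'\equiv1\pmod{\exp(G)}$ (so that $\sigma_{m'}=\sigma_m^{-1}$ and $\gcd(m',|G|)=1$), the maps $\sigma_m$ and $\sigma_{m'}$ are mutually inverse bijections of~$G$ each sending every basic set to a union of basic sets; since the only nonempty union of basic sets contained in a given basic set is that set itself, a short argument forces $\sigma_m(X)=X^{(m)}$ to be a single basic set. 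This proves $X^{(m)}\in\mathcal{S}(\mathcal{A})$. As $\sigma_m$ is a power automorphism it is central in $\aut(G)$, and we have shown it permutes $\mathcal{S}(\mathcal{A})$, i.e.\ it is a Cayley isomorphism of $\mathcal{A}$ onto itself, which is the asserted reformulation.
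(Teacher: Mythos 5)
The paper offers no proof of this lemma --- it is quoted as the classical Schur--Wielandt multiplier theorem with a reference to Wielandt's Theorem~23.9 --- and your argument is essentially the standard proof of that theorem: reduction to a prime $p\nmid|G|$, the Frobenius identity $\rho(\underline{X})^p=\rho(\underline{X^{(p)}})$ in $\mathbb{F}_pG$, the $0/1$-coefficient descent showing $X^{(p)}$ is a union of basic sets, and the mutually-inverse-bijection argument upgrading ``union of basic sets'' to a single basic set; all steps are correct. Two cosmetic remarks only: the surviving multinomial coefficients are all equal to $1$, so Fermat's little theorem plays no role, and you should state explicitly that $x\mapsto x^p$ is injective on $G$ because $p\nmid|G|$, since that is what makes $\sum_{x\in X}x^p$ equal to the $0/1$ vector $\underline{X^{(p)}}$ rather than a sum with multiplicities.
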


\begin{lemm} \label{sch}
Let $\mathcal{A}$ be an $S$-ring over an abelian group $G$, $p$  a prime divisor of $|G|$, and  $H=\{g\in G:g^p=e\}$. Then for every  $\mathcal{A}$-set $X$ the set $X^{[p]}=\{x^p:x\in~X,~|X\cap Hx|\not\equiv 0\mod p\}$ is an $\mathcal{A}$-set.
\end{lemm}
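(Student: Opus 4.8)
The plan is to pass to the group ring over $\mathbb{F}_p$ and exploit the Frobenius endomorphism. Since $G$ is abelian, the ring $\mathbb{F}_p G$ is commutative, so the map $\xi\mapsto \xi^p$ is a ring endomorphism of $\mathbb{F}_p G$; in particular the ``freshman's dream'' gives $\left(\sum_{x\in X} x\right)^p=\sum_{x\in X} x^p$ in $\mathbb{F}_p G$. I would therefore start from the observation that, because $\mathcal{A}$ is closed under multiplication and $\underline{X}\in\mathcal{A}$, the element $\underline{X}^{\,p}$ lies in $\mathcal{A}$, and hence its reduction modulo $p$ lies in the $\mathbb{F}_p$-algebra $\bar{\mathcal{A}}$ obtained by reducing $\mathcal{A}$ modulo $p$.

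Next I would compute this reduction explicitly. Writing $\underline{X}^{\,p}\equiv \sum_{x\in X} x^p \pmod p$ and collecting terms, the coefficient of a group element $y$ equals $m_y=|\{x\in X: x^p=y\}|$ reduced mod $p$. The fibre of the $p$-th power map over $y$ is a coset of $H=\{g\in G: g^p=e\}$: if $x_0^p=y$ then $z^p=y$ iff $zx_0^{-1}\in H$, so $\{x\in X: x^p=y\}=X\cap Hx_0$ and $m_y=|X\cap Hx|$ for any $x\in X$ with $x^p=y$. Consequently the support of $\underline{X}^{\,p}$ modulo $p$ is exactly the set of those $y=x^p$ for which $|X\cap Hx|\not\equiv 0\pmod p$, that is, precisely $X^{[p]}$.

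It then remains to turn the membership $\underline{X}^{\,p}\bmod p\in\bar{\mathcal{A}}$, together with the identification of its support, into the integral statement $\underline{X^{[p]}}\in\mathcal{A}$. Here I would use that the basic sets of $\mathcal{A}$ form a partition of $G$: their indicator elements have pairwise disjoint supports, so they remain $\mathbb{F}_p$-linearly independent after reduction and $\{\underline{Y}\bmod p: Y\in\mathcal{S}(\mathcal{A})\}$ is a basis of $\bar{\mathcal{A}}$. Hence any element of $\bar{\mathcal{A}}$ is an $\mathbb{F}_p$-combination $\sum_Y \bar{a}_Y\,\underline{Y}$ whose support is the union of those $Y$ with $\bar{a}_Y\neq 0$---a union of whole basic sets, with no cross-cancellation possible because the $Y$ are disjoint. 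Matching this support with $X^{[p]}$ shows that $X^{[p]}$ is a union of basic sets, whence $\underline{X^{[p]}}\in\mathcal{A}$.

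The step I expect to be the crux is this last one: the freshman's-dream computation only controls the coefficients of $\underline{X}^{\,p}$ modulo $p$, and a priori the values $|X\cap Hx|\bmod p$ could vary within a single basic set, which would seem to obstruct the conclusion. The point that makes it work is exactly the disjointness of the basic sets, which forces the support of any element of $\bar{\mathcal{A}}$ to be a union of basic sets regardless of the precise coefficient values; in fact this simultaneously shows, as a by-product, that $|X\cap Hx|\bmod p$ is constant on each basic set. I would want to state the partition and linear-independence argument carefully, since it is what bridges the characteristic-$p$ identity and the characteristic-$0$ conclusion.
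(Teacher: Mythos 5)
Your proof is correct and complete. The paper itself gives no proof of this lemma---it is quoted as one of the classical Schur multiplier theorems with a citation to \cite[Theorem~23.9]{Wi}---and your argument (the Frobenius identity $\underline{X}^{\,p}\equiv\sum_{x\in X}x^p \pmod p$ in the commutative ring $\mathbb{F}_pG$, identification of the fibres of the $p$-th power map with $H$-cosets, and the observation that the support of any element of the reduction $\bar{\mathcal{A}}$ is a union of basic sets because the basic sets are pairwise disjoint) is essentially the standard proof being invoked there, so there is nothing to compare beyond noting the agreement.
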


\subsection{Wreath and tensor products}
	
Let $\mathcal{A}$ be an $S$-ring over a group $G$ and $U/L$  an $\mathcal{A}$-section. The $S$-ring~$\mathcal{A}$ is called the \emph{$U/L$-wreath product}  if $L\trianglelefteq G$ and $L\leq\rad(X)$ for all basic sets $X$ outside~$U$. When the explicit indication of the section~$U/L$ is not important we use the term \emph{generalized wreath product}.  The $U/L$-wreath product is called \emph{nontrivial} or \emph{proper}  if $e\neq L$ and $U\neq G$. If $U=L$ we say that $\mathcal{A}$ is the \emph{wreath product} of  $\mathcal{A}_L$ and $\mathcal{A}_{G/L}$ and write $\mathcal{A}=\mathcal{A}_L\wr\mathcal{A}_{G/L}$.

\begin{lemm}\cite[Lemma 4.4]{Ry}\label{sepwr}
Let $\mathcal{A}$ be the $U/L$-wreath product over an abelian group $G$. Suppose that $\mathcal{A}_U$ and $\mathcal{A}_{G/L}$ are separable and $\aut(\mathcal{A}_U)^{U/L}=\aut(\mathcal{A}_{U/L})$. Then $\mathcal{A}$ is separable. In particular, the wreath product of two separable $S$-rings is separable.
\end{lemm}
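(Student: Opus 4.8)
The plan is to prove separability by transporting the generalized wreath structure along the given algebraic isomorphism and then assembling a combinatorial isomorphism coset by coset. So let $\varphi\colon\mathcal A\to\mathcal A'$ be an algebraic isomorphism onto an $S$-ring $\mathcal A'$ over an abelian group $G'$; I must induce $\varphi$ by a bijection $f\colon G\to G'$. First I would set $U'=U^{\varphi}$ and $L'=L^{\varphi}$. Since $\varphi$ respects the lattice of $\mathcal A$-subgroups, radicals, and the partition of the basic sets into those inside $U$ and those outside $U$, the images $U',L'$ are $\mathcal A'$-subgroups with $L'\trianglelefteq G'$, and $\mathcal A'$ is the $U'/L'$-wreath product; in particular $L'\le\rad(X')$ for every basic set $X'$ of $\mathcal A'$ outside $U'$. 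Restricting $\varphi$ to the $\mathcal A$-sections $U$ and $G/L$ yields algebraic isomorphisms $\varphi_U\colon\mathcal A_U\to\mathcal A'_{U'}$ and $\varphi_{G/L}\colon\mathcal A_{G/L}\to\mathcal A'_{G'/L'}$, both of which restrict to one and the same algebraic isomorphism $\varphi_{U/L}\colon\mathcal A_{U/L}\to\mathcal A'_{U'/L'}$ on the common section. As $U'$ and $G'/L'$ are abelian, separability of $\mathcal A_U$ and of $\mathcal A_{G/L}$ supplies combinatorial isomorphisms $f_U$ inducing $\varphi_U$ and $\bar f$ inducing $\varphi_{G/L}$.

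Next I would build $f$ on each coset $Uc$ of $U$ in $G$ as a translate of a combinatorial isomorphism $U\to U'$ inducing $\varphi_U$, choosing the translate so that $f$ descends to $\bar f$ modulo $L$, that is, $\pi'\circ f=\bar f\circ\pi$, where $\pi\colon G\to G/L$ and $\pi'\colon G'\to G'/L'$ are the canonical epimorphisms. The point is that $\bar f$ maps each coset of the $\mathcal A_{G/L}$-subgroup $U/L$ onto a coset of $U'/L'$ and restricts on it to a combinatorial isomorphism which, read as a map $U/L\to U'/L'$, induces $\varphi_{U/L}$; denote it $\bar\psi_c$. I then want, for the coset $Uc$, a combinatorial isomorphism $f_U^{(c)}\colon U\to U'$ inducing $\varphi_U$ whose section projection satisfies $(f_U^{(c)})^{U/L}=\bar\psi_c$, and I would set $f(uc)=f_U^{(c)}(u)\,c'$ with $c'$ a representative of the target coset $\bar f(cL)$. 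A direct check then gives $\pi'(f(uc))=\bar f(ucL)$, so $\pi'\circ f=\bar f\circ\pi$, and the cosets $U'c'$ run exactly once through $G'/U'$ (because $\bar f$ bijects the $U/L$-cosets of $G/L$ with the $U'/L'$-cosets of $G'/L'$), so $f$ is a well-defined bijection.

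The heart of the matter --- and the step I expect to be the main obstacle --- is producing $f_U^{(c)}$ with the prescribed projection $\bar\psi_c$, and this is exactly where the hypothesis $\aut(\mathcal A_U)^{U/L}=\aut(\mathcal A_{U/L})$ enters. Since $\bar\psi_c$ and $(f_U)^{U/L}$ are two combinatorial isomorphisms $U/L\to U'/L'$ inducing the same algebraic isomorphism $\varphi_{U/L}$, their composite $((f_U)^{U/L})^{-1}\bar\psi_c$ induces the identity and hence lies in $\aut(\mathcal A_{U/L})$. The hypothesis lets me lift it to some $g_c\in\aut(\mathcal A_U)$ with $g_c^{U/L}=((f_U)^{U/L})^{-1}\bar\psi_c$; then $f_U^{(c)}:=f_U g_c$ still induces $\varphi_U$, since $g_c$ induces the identity on $\mathcal A_U$, and projects to $\bar\psi_c$, as required. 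I would carry this lift out independently for every coset $Uc$, which is precisely what makes the local translates fit together consistently.

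It remains to verify that the resulting $f$ is a combinatorial isomorphism inducing $\varphi$, which I would split according to the two types of basic sets. For a basic set $X$ inside $U$ the arcs of $\cay(G,X)$ join elements lying in a common coset $Uc$, and there $f$ is a translate of $f_U^{(c)}$, a combinatorial isomorphism inducing $\varphi_U$; hence $R(X)^f=R(X^{\varphi})$. For a basic set $X$ outside $U$ one has $L\le\rad(X)$, so $R(X)$ depends only on the $L$-cosets of its endpoints; since $\pi'\circ f=\bar f\circ\pi$ and $\bar f$ induces $\varphi_{G/L}$, again $R(X)^f=R(X^{\varphi})$. As these two families exhaust the basic sets, $f$ induces $\varphi$, and $\mathcal A$ is separable. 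Finally, the particular case is immediate: when $U=L$ the section $U/L$ is trivial, both $\aut(\mathcal A_U)^{U/L}$ and $\aut(\mathcal A_{U/L})$ are trivial, so the extra hypothesis holds automatically, and the wreath product $\mathcal A_L\wr\mathcal A_{G/L}$ of two separable $S$-rings is separable.
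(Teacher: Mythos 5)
The paper gives no proof of this lemma at all---it is quoted directly from \cite[Lemma~4.4]{Ry}---so there is nothing internal to compare with; your argument is correct and is precisely the standard one: transport the $U/L$-wreath structure along $\varphi$, glue together coset-wise translates of isomorphisms of $\mathcal{A}_U$ over an isomorphism of $\mathcal{A}_{G/L}$, and use the hypothesis $\aut(\mathcal{A}_U)^{U/L}=\aut(\mathcal{A}_{U/L})$ to correct each local piece so that its projection to the common section agrees with the prescribed map $\bar\psi_c$. The only cosmetic point is that as constructed your $f$ satisfies $R(X)^f=R(X^{\varphi})$ but $X^f=X^{\varphi}f(e)$; composing with the right translation by $f(e)^{-1}$, which lies in $\aut(\mathcal{A}')$ and changes no arc set, normalizes $f(e)=e'$ so that $f$ literally induces $\varphi$ in the sense of the paper's definition.
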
    

If $\mathcal{A}_1$ and $\mathcal{A}_2$ are $S$-rings over groups $G_1$ and $G_2$ respectively then the subring $\mathcal{A}=\mathcal{A}_1\otimes \mathcal{A}_2$ of the ring $\mathbb{Z}G_1\otimes\mathbb{Z}G_2=\mathbb{Z}G$, where $G=G_1\times G_2$, is an $S$-ring over the group $G$ with
$$\mathcal{S}(\mathcal{A})=\{X_1\times X_2: X_1\in\mathcal{S}(\mathcal{A}_1),\ X_2\in\mathcal{S}(\mathcal{A}_2)\}.$$
It is called the \emph{tensor product} of $\mathcal{A}_1$ and $\mathcal{A}_2$.

\begin{lemm}\cite[Lemma 2.3]{EKP}\label{tenspr}
Let $\mathcal{A}$ be an $S$-ring over an abelian group $G=G_1\times G_2$. Suppose that $G_1$ and $G_2$ are $\mathcal{A}$-subgroups. Then 

$(1)$ $X_{G_i}\in \mathcal{S}(\mathcal{A})$ for all $X\in \mathcal{S}(\mathcal{A})$ and $i=1,2;$

$(2)$ $\mathcal{A} \geq \mathcal{A}_{G_1}\otimes \mathcal{A}_{G_2}$, and the equality is attained whenever $\mathcal{A}_{G_i}=\mathbb{Z}G_i$ for some $i\in \{1,2\}$.
\end{lemm}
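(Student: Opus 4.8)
The plan is to derive both parts from two elementary facts about the $\mathcal{A}$-subgroups $G_1,G_2$: first that $\underline{G_1},\underline{G_2}\in\mathcal{A}$, and second that the $\mathcal{A}$-sets form a lattice (closed under intersection and union) whose atoms are the basic sets. The latter yields the key principle that for any $\mathcal{A}$-set $T$ and any $X\in\mathcal{S}(\mathcal{A})$ one has either $X\subseteq T$ or $X\cap T=\emptyset$, since $X\cap T$ is an $\mathcal{A}$-set contained in the basic set $X$. For part~(1) I will treat $i=2$ (projection onto $G_1$), the case $i=1$ being symmetric. Fix $X\in\mathcal{S}(\mathcal{A})$ and put $\xi=\underline{X}\,\underline{G_2}\in\mathcal{A}$. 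Writing each $x\in X$ uniquely as a product of its $G_1$- and $G_2$-parts and using $G=G_1\times G_2$, a direct computation gives $\xi=\sum_{a\in X_{G_1}}n_a\,\underline{\{a\}\times G_2}$ with $n_a=|X\cap aG_2|\ge 1$. In particular $\xi$ is a nonnegative integer combination of basic sets, so its support $X_{G_1}\times G_2$ is an $\mathcal{A}$-set; intersecting with the $\mathcal{A}$-subgroup $G_1$ and noting $(aG_2)\cap G_1=\{a\}$ for $a\in G_1$ shows that $X_{G_1}=(X_{G_1}\times G_2)\cap G_1$ is an $\mathcal{A}$-set.

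It remains to upgrade ``$\mathcal{A}$-set'' to ``basic set'', which I expect to be the only genuine obstacle. Decompose $X_{G_1}$ into basic sets $Y_1,\dots,Y_k\subseteq G_1$. For each $j$ the set $Y_j\times G_2$ is an $\mathcal{A}$-set, because $\underline{Y_j\times G_2}=\underline{Y_j}\,\underline{G_2}\in\mathcal{A}$ (here no multiplicities appear, since $(a,b)\mapsto ab$ is a bijection $G_1\times G_2\to G$). These sets are pairwise disjoint and their union contains $X$; since $X$ is an atom of the lattice of $\mathcal{A}$-sets, $X$ lies inside exactly one $Y_j\times G_2$, whence $X_{G_1}\subseteq Y_j$ and therefore $X_{G_1}=Y_j\in\mathcal{S}(\mathcal{A})$. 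This proves~(1).

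For part~(2), each basic set of $\mathcal{A}_{G_1}\otimes\mathcal{A}_{G_2}$ is of the form $Y_1\times Y_2$ with $Y_i\in\mathcal{S}(\mathcal{A}_{G_i})$, that is, with $Y_i\subseteq G_i$ a basic set of $\mathcal{A}$; as $\underline{Y_1\times Y_2}=\underline{Y_1}\,\underline{Y_2}\in\mathcal{A}$, every such product is an $\mathcal{A}$-set, which gives the inclusion $\mathcal{A}\ge\mathcal{A}_{G_1}\otimes\mathcal{A}_{G_2}$. For the equality under $\mathcal{A}_{G_1}=\mathbb{Z}G_1$ (the case $i=1$; $i=2$ is symmetric), take any $X\in\mathcal{S}(\mathcal{A})$. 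By~(1) the set $X_{G_1}$ is a basic set of $\mathcal{A}$ contained in $G_1$, hence a basic set of $\mathcal{A}_{G_1}=\mathbb{Z}G_1$, i.e. a singleton $\{a\}$; thus $X\subseteq aG_2$, forcing $X=\{a\}\times X_{G_2}$, and by~(1) again $X_{G_2}\in\mathcal{S}(\mathcal{A}_{G_2})$. Therefore every basic set of $\mathcal{A}$ is already a basic set of $\mathbb{Z}G_1\otimes\mathcal{A}_{G_2}$; combined with the inclusion $\mathbb{Z}G_1\otimes\mathcal{A}_{G_2}\subseteq\mathcal{A}$ (under which each basic set of the tensor product is a union of basic sets of $\mathcal{A}$), this forces the two partitions, and hence the two $S$-rings, to coincide.
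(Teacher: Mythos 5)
Your proof is correct and complete; the paper itself offers no argument for this lemma (it is quoted from \cite[Lemma~2.3]{EKP}), and your route --- multiplying $\underline{X}$ by $\underline{G_2}$, taking the support to see that $X_{G_1}\times G_2$ is an $\mathcal{A}$-set, and then using that basic sets are atoms of the lattice of $\mathcal{A}$-sets to pin $X$ inside a single block $Y_j\times G_2$ --- is essentially the standard argument behind the cited result. The only blemish is a labelling slip at the start of part~(1): the projection onto $G_1$ is the case $i=1$ of the statement, not $i=2$; since the two cases are symmetric this has no mathematical consequence.
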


\begin{lemm} \label{schurtens}
The	 tensor product of two separable $S$-rings is separable.
\end{lemm}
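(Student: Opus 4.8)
The plan is to reduce the separability of $\mathcal{A}=\mathcal{A}_1\otimes\mathcal{A}_2$ over $G=G_1\times G_2$ to that of its two factors by tracking how a given algebraic isomorphism acts on the two direct summands. So let $\varphi:\mathcal{A}\to\mathcal{A}'$ be an algebraic isomorphism onto an $S$-ring $\mathcal{A}'$ over an abelian group $G'$; I must produce a combinatorial isomorphism inducing $\varphi$. First I would record that $G_1$ and $G_2$ are $\mathcal{A}$-subgroups (since $\underline{G_i}$ is the sum of the $\underline{X}$ over the basic sets $X=X_1\times X_2$ lying in $G_i$) and that $\mathcal{A}_{G_i}\cong\mathcal{A}_i$. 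Writing $H_i=G_i^{\varphi}$, the algebraic isomorphism restricts to algebraic isomorphisms $\varphi_i:=\varphi_{G_i}:\mathcal{A}_i\to\mathcal{A}'_{H_i}$. Because $\varphi$ is an isomorphism of the lattice of $\mathcal{A}$-subgroups and preserves cardinalities, from $G_1\cap G_2=\{e\}$ and $\langle G_1,G_2\rangle=G$ I obtain $H_1\cap H_2=\{e\}$ and $|H_1||H_2|=|G'|$, whence $G'=H_1\times H_2$. In particular each $H_i$ is abelian, being a subgroup of the abelian group $G'$.

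The second step applies the hypothesis. Each $\varphi_i$ is an algebraic isomorphism from the separable $S$-ring $\mathcal{A}_i$ to an $S$-ring over the abelian group $H_i$, so by separability of $\mathcal{A}_i$ it is induced by a combinatorial isomorphism $f_i:G_i\to H_i$; thus $X_i^{f_i}=X_i^{\varphi_i}$ for every $X_i\in\mathcal{S}(\mathcal{A}_i)$, and $f_i$ is an isomorphism of $\cay(G_i,X_i)$ onto $\cay(H_i,X_i^{\varphi_i})$. I then set $f=f_1\times f_2:G\to G'$ and claim that $f$ is a combinatorial isomorphism inducing $\varphi$.

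The key computation, and the place where care is needed, is to verify that $\varphi$ acts on the basic sets of $\mathcal{A}$ exactly as the ``product'' of $\varphi_1$ and $\varphi_2$, namely $(X_1\times X_2)^{\varphi}=X_1^{\varphi_1}\times X_2^{\varphi_2}$. This I would deduce from the identity $\underline{X_1\times\{e\}}\cdot\underline{\{e\}\times X_2}=\underline{X_1\times X_2}$ in $\mathbb{Z}G$: the right-hand side is a single basic set occurring with coefficient $1$, so $X_1\times X_2$ is the unique basic set with nonzero structure constant for the pair $(X_1\times\{e\},\{e\}\times X_2)$. Since $\varphi$ preserves structure constants and sends the basic set $X_1\times\{e\}\subseteq G_1$ to $X_1^{\varphi_1}\times\{e\}\subseteq H_1$ and $\{e\}\times X_2$ to $\{e\}\times X_2^{\varphi_2}\subseteq H_2$, it follows that $\underline{X_1^{\varphi_1}\times\{e\}}\cdot\underline{\{e\}\times X_2^{\varphi_2}}=\underline{(X_1\times X_2)^{\varphi}}$; as $G'=H_1\times H_2$ the product on the left is direct and equals $\underline{X_1^{\varphi_1}\times X_2^{\varphi_2}}$, giving $(X_1\times X_2)^{\varphi}=X_1^{\varphi_1}\times X_2^{\varphi_2}$. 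Consequently $(X_1\times X_2)^{f}=X_1^{f_1}\times X_2^{f_2}=(X_1\times X_2)^{\varphi}$ is a basic set of $\mathcal{A}'$.

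The last step checks that $f$ respects the Cayley graph structure, for which I would use the standard fact that a coordinatewise product of isomorphisms of Cayley digraphs is an isomorphism of the product Cayley digraph: an arc of $\cay(G,X_1\times X_2)$ has the form $\bigl((a_1,a_2),(a_1x_1,a_2x_2)\bigr)$ with $x_i\in X_i$, and since $f_i$ carries arcs of $\cay(G_i,X_i)$ to arcs of $\cay(H_i,X_i^{\varphi_i})$, applying $f$ turns it into an arc of $\cay(G',X_1^{\varphi_1}\times X_2^{\varphi_2})$, with the converse coming from bijectivity of $f$. Hence $f$ is a combinatorial isomorphism, and by the previous paragraph it induces $\varphi$, so $\mathcal{A}$ is separable. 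I expect the only genuinely delicate points to be the bookkeeping yielding $G'=H_1\times H_2$ and the structure-constant argument identifying $\varphi$ with the product of $\varphi_1$ and $\varphi_2$; everything else is routine, and Lemma~\ref{tenspr} could alternatively be invoked to confirm directly that $\mathcal{A}'=\mathcal{A}'_{H_1}\otimes\mathcal{A}'_{H_2}$.
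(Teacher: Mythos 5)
Your argument is correct, but it is worth noting that the paper does not actually prove this lemma: it simply cites \cite[Theorem~1.20]{E} (Evdokimov's thesis), so you have supplied a self-contained proof where the author defers to an external result. Your route is the natural one and all the essential points are in place: the lattice/cardinality bookkeeping giving $G'=H_1\times H_2$ is sound (in an abelian group $H_1H_2=G'$ and $H_1\cap H_2=\{e\}$ do force a direct product), and the structure-constant computation $c^{Z}_{X_1\times\{e\},\,\{e\}\times X_2}=\delta_{Z,X_1\times X_2}$ correctly pins down $(X_1\times X_2)^{\varphi}=X_1^{\varphi_1}\times X_2^{\varphi_2}$, after which $f=f_1\times f_2$ visibly induces $\varphi$ and is an isomorphism on each Cayley graph $\cay(G,X_1\times X_2)$. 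Note that your argument only establishes separability with respect to the class of abelian groups (you need $H_i$ abelian, hence $G'$ abelian, to invoke the hypothesis on $\mathcal{A}_i$), which is exactly the convention in force in this paper, so that is not a defect here. One small caveat: your closing aside that Lemma~\ref{tenspr} ``could alternatively be invoked to confirm directly that $\mathcal{A}'=\mathcal{A}'_{H_1}\otimes\mathcal{A}'_{H_2}$'' overstates what that lemma gives --- it only yields the containment $\mathcal{A}'\geq\mathcal{A}'_{H_1}\otimes\mathcal{A}'_{H_2}$ in general, with equality under an extra hypothesis; the equality in your situation instead follows a posteriori from your structure-constant argument, since $\varphi$ is a bijection on basic sets and every basic set of $\mathcal{A}'$ is therefore of the form $Y_1\times Y_2$. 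This does not affect the validity of the main argument.
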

	
\begin{proof}
Follows from~\cite[Theorem~1.20]{E}.
\end{proof}

\subsection{Subdirect product}

Let $U=\langle u \rangle$ and $V=\langle v \rangle$  cyclic groups and $|U|$ divides $|V|$. Then $V$ contains the unique subgroup $W$ of index~$|U|$. Let $\pi:V\rightarrow V/W$ be the canonical epimorphism and $\psi:U\rightarrow V/W$  an isomorphism.  We can form the subdirect product $A(U,V,\psi)$ of $U$ and $V$ in the following way:
$$A(U,V,\psi)=\{(x,y)\in U\times V| x^{\psi}=y^{\pi}\}.$$
The definition of $A(U,V,\psi)$ implies that
$$|A(U,V,\psi)|=|V|.~\eqno(1)$$
We say that the subdirect product of two groups is \emph{nontrivial} if it does not coincide with the direct product of these groups.

\section{$S$-rings over an abelian group of order $4p$}

Let $p$ be a prime. Put $E_1=\langle  a \rangle \times \langle b \rangle$, $E_2=\langle c \rangle$, and $P=\langle z \rangle$, where $|a|=|b|=2$, $|c|=4$, and $|z|=p$. Let $E\in \{E_1,E_2\}$ and $G=E \times P$. These notations are valid until the end of the paper. Throughout this section $\mathcal{A}$ is an $S$-ring over $G$.

\begin{lemm}\label{Sring0}
If $p=2$ and $E=E_1$ then one of the following statements holds:

$(1)$ $\mathcal{A}=\mathbb{Z}G$;

$(2)$ $\rk(\mathcal{A})=2$;

$(3)$ $\mathcal{A}$ is the tensor product of two $S$-rings over proper subgroups of $G$;

$(4)$ $\mathcal{A}$ is the wreath product of two $S$-rings over proper subgroups of $G$.
\end{lemm}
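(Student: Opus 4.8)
The plan is to split according to the rank and to the lattice of $\mathcal{A}$-subgroups, using throughout that every subset of $G=C_2\times C_2\times C_2$ is symmetric (each non-identity element is an involution), so $X=X^{-1}$ for every basic set $X$. Statements $(1)$ and $(2)$ are the two extreme cases, so I may assume $2<\rk(\mathcal{A})$ and $\mathcal{A}\neq\mathbb{Z}G$, and the goal becomes to exhibit $\mathcal{A}$ as a tensor or a wreath product over proper subgroups. The engine of the argument is the claim that such an $\mathcal{A}$ must possess a proper nontrivial $\mathcal{A}$-subgroup; equivalently, a primitive $S$-ring over $G$ has rank~$2$.

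First I would establish this claim by inspecting the partition of the seven non-identity elements of $G$ into basic sets. If some basic set is a singleton $\{t\}$, then $\langle t\rangle$ is an $\mathcal{A}$-subgroup of order~$2$; if some basic set $Y$ has $|Y|=2$, then $\langle Y\rangle$ has order~$4$ and is an $\mathcal{A}$-subgroup; in either case we are done. The only remaining pattern with at least two basic sets, all of size $\geq 3$, is $7=4+3$. Writing $Y$ for the basic set of size~$3$ and $X$ for the one of size~$4$, the coefficient of $e$ in $\underline{Y}\,\underline{Y}$ equals $|Y|=3$, and the remaining coefficients satisfy $4\alpha+3\beta=|Y|^2-3=6$ with $\alpha,\beta\geq 0$, forcing $\underline{Y}\,\underline{Y}=3e+2\underline{Y}$. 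Hence every product of two elements of $Y$ lies in $\{e\}\cup Y$, so $\langle Y\rangle=\{e\}\cup Y$ is a subgroup of order~$4$, and again an $\mathcal{A}$-subgroup. This settles the main step.

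It remains to turn a proper nontrivial $\mathcal{A}$-subgroup into a tensor or a wreath decomposition, which I would do by a short analysis of the sublattice of $\mathcal{A}$-subgroups (closed under intersection and product). If there exist an order-$2$ subgroup $H_1$ and an order-$4$ subgroup $H_2$ among them with $H_1\cap H_2=\{e\}$, then $G=H_1\times H_2$; since $|H_1|=2$ we have $\mathcal{A}_{H_1}=\mathbb{Z}H_1$, so Lemma~\ref{tenspr}$(2)$ yields $\mathcal{A}=\mathcal{A}_{H_1}\otimes\mathcal{A}_{H_2}$, which is case~$(3)$. In the complementary situation, where no such pair exists, I would pick a minimal nontrivial $\mathcal{A}$-subgroup $L$ and check, running through the few admissible basic-set size patterns, that every basic set $X\not\subseteq L$ is a union of $L$-cosets, i.e. $L\leq\rad(X)$. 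This is exactly the condition $\mathcal{A}=\mathcal{A}_{L}\wr\mathcal{A}_{G/L}$, placing $\mathcal{A}$ in case~$(4)$.

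The neat but easy crux is the identity $\underline{Y}\,\underline{Y}=3e+2\underline{Y}$ in the $4+3$ pattern. The genuinely delicate part, and the one I expect to be the main obstacle, is the bookkeeping in the last paragraph: organizing the complementary situation so that the coset condition $L\leq\rad(X)$ is verified uniformly rather than by ad hoc inspection of each possible $\mathcal{A}$-subgroup lattice, and confirming that the tensor and wreath alternatives together exhaust every configuration left after the main step.
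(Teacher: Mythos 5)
Your route is genuinely different from the paper's: the paper simply cites a COCO2 computation showing there are exactly nine $S$-rings over $C_2^3$ up to Cayley isomorphism and checks the statement against that list, whereas you give a hand proof. Your first main step is correct and nicely executed: if $2<\rk(\mathcal{A})$ and $\mathcal{A}\neq\mathbb{Z}G$, then either some basic set has size $1$ or $2$ (giving an $\mathcal{A}$-subgroup of order $2$ or $4$ via $\langle Y\rangle$), or the nonidentity elements split as $4+3$ and your computation $\underline{Y}\,\underline{Y}=3e+2\underline{Y}$ forces $\{e\}\cup Y$ to be an $\mathcal{A}$-subgroup of order $4$. The tensor half of the final step is also fine: a pair of $\mathcal{A}$-subgroups $H_1,H_2$ of orders $2$ and $4$ with trivial intersection gives $\mathcal{A}=\mathcal{A}_{H_1}\otimes\mathcal{A}_{H_2}$ by Lemma~\ref{tenspr}$(2)$.

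The gap is in your complementary case, and it is a real one, not just unfinished bookkeeping: the claim that a \emph{minimal} nontrivial $\mathcal{A}$-subgroup $L$ satisfies $L\leq\rad(X)$ for every basic set $X\not\subseteq L$ is false. Take $G=\langle a\rangle\times\langle b\rangle\times\langle c\rangle$ and $\mathcal{A}=\mathbb{Z}\langle b,c\rangle\wr\mathbb{Z}(G/\langle b,c\rangle)$, with basic sets $\{e\},\{b\},\{c\},\{bc\},a\langle b,c\rangle$. Here no order-$2$ $\mathcal{A}$-subgroup is disjoint from an order-$4$ one (all three order-$2$ $\mathcal{A}$-subgroups lie inside $\langle b,c\rangle$, the unique order-$4$ $\mathcal{A}$-subgroup), so you are in your complementary case; but the minimal $\mathcal{A}$-subgroups have order $2$, and e.g. for $L=\langle b\rangle$ the singleton basic set $\{c\}$ is not a union of $L$-cosets. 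This $S$-ring does land in case~$(4)$, but only via the non-minimal subgroup $L=\langle b,c\rangle$. So the decomposition subgroup cannot be chosen minimal; you would need to argue differently (for instance, split on whether the order-$4$ $\mathcal{A}$-subgroup $H$ produced in your first step has a single basic set outside it --- giving the wreath product over $H$ --- and show that otherwise a basic set of size $1$ or $2$ outside $H$ produces either a complementary pair or a smaller subgroup over which the wreath condition does hold). Until that case analysis is actually carried out and shown to exhaust all configurations, the proof is incomplete.
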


\begin{proof}
There are exactly nine $S$-rings over $G$ up to Cayley isomorphism. This can be checked with the help of the GAP package COCO2~\cite{GAP}. The statement of the lemma can be established by inspecting the above nine $S$-rings one after the other.
\end{proof}

From now on throughout this section we assume that $p\geq 3$.

\begin{lemm}\label{Sring1}
If $E$ or $P$ is not an $\mathcal{A}$-subgroup then one of the following statements holds:

$(1)$ $\rk(\mathcal{A})=2$;

$(2)$ $\mathcal{A}$ is the proper $U/L$-wreath product for some $\mathcal{A}$-section $U/L$ with $|U/L|\leq 2$;

$(3)$ $E=E_1$ and $\mathcal{A}=\mathcal{A}_H \otimes \mathcal{A}_L$, where $H$ is an $\mathcal{A}$-subgroup of order $2$, $L$ is an $\mathcal{A}$-subgroup of order $2p$, and $G=H\times L$.

\end{lemm}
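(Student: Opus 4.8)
The plan is to analyze the structure of $\mathcal{A}$ under the hypothesis that at least one of the factors $E$ or $P$ fails to be an $\mathcal{A}$-subgroup. The key observation is that $P = \langle z \rangle$ is the unique Sylow $p$-subgroup of $G$, so it is characteristic; consequently $P$ is automatically an $\mathcal{A}$-subgroup by a standard argument (the set of $p$-elements is a union of basic sets, being closed under rational conjugacy via Lemma~\ref{burn}, and its span generates $P$). This means the only way the hypothesis can hold is if $E$ is \emph{not} an $\mathcal{A}$-subgroup while $P$ \emph{is}. So I would first reduce to that case, then study how the basic sets interact with the $\mathcal{A}$-subgroup $P$ and the quotient $G/P \cong E$.

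\textbf{Main line of argument.} Since $P$ is an $\mathcal{A}$-subgroup, consider the $\mathcal{A}$-section $G/P \cong E$ and the induced $S$-ring $\mathcal{A}_{G/P}$. The group $E$ has order $4$, so $\mathcal{A}_{G/P}$ is an $S$-ring over a group of order $4$, and these are completely understood: either it has rank $2$, or it is a nontrivial (generalized) wreath product with respect to a section of order $2$, or — when $E = E_1 \cong C_2 \times C_2$ — it can be a tensor-type decomposition. The strategy is to pull this structural information back to $G$. The Schur multiplier results (Lemmas~\ref{burn} and~\ref{sch}, in particular the map $X \mapsto X^{[p]}$ detecting $p$-th powers) let me relate basic sets of $\mathcal{A}$ to basic sets of $\mathcal{A}_{G/P}$ and of $\mathcal{A}_P = \mathbb{Z}P$ or a sub-$S$-ring thereof. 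Because $p \geq 3$ is coprime to $4 = |E|$, the arithmetic separates the $2$-part from the $p$-part cleanly, which is exactly what forces the wreath or tensor structure.

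\textbf{Case analysis.} The heart of the proof is a case split according to whether $E$ is $E_1$ or $E_2$ and which $\mathcal{A}$-subgroups actually occur. If $\mathcal{A}_{G/P}$ has rank $2$ but $E$ is not an $\mathcal{A}$-subgroup, I expect the basic sets outside the relevant section to have radical containing a subgroup of order dividing $2$, yielding conclusion~(2): a proper $U/L$-wreath product with $|U/L| \leq 2$. Here $L$ will be the order-$2$ $\mathcal{A}$-subgroup that the basic sets respect, and I would verify the defining condition $L \leq \rad(X)$ for every basic set $X$ outside $U$ directly from the failure of $E$ to split off. When $E = E_1$, there is the extra possibility that $G$ decomposes as $H \times L$ with $|H| = 2$ and $|L| = 2p$, both $\mathcal{A}$-subgroups; invoking Lemma~\ref{tenspr}(2) with $\mathcal{A}_H = \mathbb{Z}H$ (forced since $|H| = 2$) gives $\mathcal{A} = \mathcal{A}_H \otimes \mathcal{A}_L$, which is conclusion~(3).

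\textbf{The main obstacle} I anticipate is establishing that when $E$ is not an $\mathcal{A}$-subgroup, some order-$2$ subgroup $L$ of $E$ \emph{is} an $\mathcal{A}$-subgroup serving as the wreath kernel, and that all basic sets outside $U = \langle L, P \rangle$ (or the appropriate $U$) genuinely have $L$ in their radical. This is where the coprimality of $p$ and $4$ and the multiplier lemmas must be combined carefully: I would argue that a basic set $X$ not contained in a proper $\mathcal{A}$-subgroup must, after projecting via $X \mapsto X^{[p]}$ and via the quotient to $G/P$, reveal a common $2$-torsion symmetry, and that the failure of $E$ itself to be an $\mathcal{A}$-set forces this symmetry to be a single involution rather than all of $E$. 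Disentangling the subcase where no order-$2$ subgroup is an $\mathcal{A}$-subgroup at all — which should be incompatible with $E$ not being an $\mathcal{A}$-subgroup when $E = E_2 \cong C_4$, by examining the $\mathcal{A}$-subgroup $\langle X^{[p]} \rangle$ — is the delicate point that pins down exactly which of the three conclusions holds.
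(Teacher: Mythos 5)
There is a genuine gap at the very first step. You claim that $P$ is automatically an $\mathcal{A}$-subgroup because it is the (characteristic) Sylow $p$-subgroup, arguing that the set of $p$-elements is a union of basic sets by closure under rational conjugacy. This is false. Lemma~\ref{burn} only says that $X^{(m)}$ is again a basic set for $m$ coprime to $|G|$; it does not prevent a single basic set from containing elements of different orders, so the set of elements of order $p$ need not be an $\mathcal{A}$-set. Concretely, for the rank-$2$ $S$-ring the only $\mathcal{A}$-subgroups are $e$ and $G$; and for $\mathcal{A}=\mathbb{Z}L\wr\mathcal{B}$ with $L\leq E$ of order $2$ and $\rk(\mathcal{B})=2$ over $G/L$, one has $\rk(\mathcal{A})=3$ and still no $\mathcal{A}$-subgroup of order $p$. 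Hence your reduction "the only way the hypothesis can hold is if $E$ is not an $\mathcal{A}$-subgroup while $P$ is" collapses, and with it the whole plan of passing to the section $G/P$ (which is only an $\mathcal{A}$-section when $P$ is an $\mathcal{A}$-subgroup). The case where $P$ fails to be an $\mathcal{A}$-subgroup is a genuine case that must be treated; the paper does so by invoking the duality theory of $S$-rings over abelian groups and results of Evdokimov--Ponomarenko (\cite[Theorem~2.4, Theorem~2.5]{EP4}), which is a substantive ingredient absent from your proposal.

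For the remaining case ($E$ not an $\mathcal{A}$-subgroup), your outline points in a reasonable direction but never actually establishes the key dichotomy. The paper gets it in one step by taking $H$ to be the maximal $\mathcal{A}$-subgroup of $E$ and citing \cite[Lemma~6.2]{EKP}: either $\mathcal{A}=\mathcal{A}_H\wr\mathcal{A}_{G/H}$ with $\rk(\mathcal{A}_{G/H})=2$, or $\mathcal{A}$ is a $U/L$-wreath product with $P\leq L<G$ and $U=HL$; the three conclusions of the lemma then fall out by checking whether $H$ is trivial and whether $U=G$. Your proposal replaces this with expectations ("I expect the basic sets\ldots to have radical containing a subgroup of order dividing $2$") that are exactly the content needing proof; the coprimality of $p$ and $4$ together with the multiplier lemmas does not by itself yield the radical condition $L\leq\rad(X)$ for basic sets outside $U$. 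As written, the argument does not close either half of the lemma.
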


\begin{proof}
Let $\mathcal{A}$ be an $S$-ring over $G$. Denote the maximal $\mathcal{A}$-subgroup in $E$ by $H$. Suppose that $H\neq E$.  Then \cite[Lemma~6.2]{EKP} implies that one of the following statements holds: (1) $\mathcal{A}=\mathcal{A}_{H}\wr \mathcal{A}_{G/H}$, where $\rk(\mathcal{A}_{G/H})=2$; (2) $\mathcal{A}$ is the $U/L$-wreath product, where  $P\leq L<G$ and $U=HL$. In the former case Statement~1 of the lemma holds whenever $H$ is trivial and Statement~2 of the lemma holds whenever $H$ is not trivial. In the latter case Statement~2 of the lemma holds whenever $U<G$. Suppose that $U=G$. Then $|H|=2$ and $G=H\times L$. This yields that $E=E_1\cong C_2\times C_2$. Clearly, $\mathcal{A}_H=\mathbb{Z}H$.  So $\mathcal{A}=\mathcal{A}_H \otimes \mathcal{A}_L$ by Statement~2 of Lemma~\ref{tenspr} and Statement~3 of the lemma holds.

The case when $P$ is not an $\mathcal{A}$-subgroup is dual to the case when $E$ is not an $\mathcal{A}$-subgroup in the sense of the duality theory of $S$-rings over an abelian group, see \cite[Section 2.2]{EP4}. So in this case the lemma follows from  \cite[Theorem~2.4, Statement~2 of Theorem~2.5]{EP4}.
\end{proof}

Further until the end of the section we assume that $E$ and $P$ are $\mathcal{A}$-subgroups.

\begin{lemm}\label{conj}
If $X,Y\in \mathcal{S}(\mathcal{A})$ and $X_E=Y_E$ then $X$ and $Y$ are rationally conjugate.
\end{lemm}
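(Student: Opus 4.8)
The plan is to pass to the two projections of $X$ and $Y$, to control the ``fibers'' of a basic set over its $E$-projection by means of the multiplier theorems, and thereby to reduce the statement to a single rational conjugacy in the cyclic group $P$. As a first reduction, since $E$ and $P$ are $\mathcal{A}$-subgroups of $G=E\times P$, Statement~(1) of Lemma~\ref{tenspr} gives $X_E,X_P,Y_E,Y_P\in\mathcal{S}(\mathcal{A})$; in particular $X_P,Y_P$ are basic sets of the $S$-ring $\mathcal{A}_P$ over $P\cong C_p$. As $p$ is prime, $\mathcal{A}_P$ is cyclotomic, so its nonidentity basic sets are the orbits of a fixed subgroup $M\le\aut(P)$; each such orbit avoids $e$, and any two of them are rationally conjugate inside $P$. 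Put $D:=X_E=Y_E$. If $D=\{e\}$ then $X,Y\subseteq P$ and the claim follows from the previous sentence; if $X_P=\{e\}$ then $X\subseteq E$, whence $X=D$, and a symmetric argument applies. So I may assume $D\neq\{e\}$ and $X_P\neq\{e\}$; note that then $X_P\subsetneq P$, so no fiber of $X$ is all of $P$.

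The technical core of the first half is an \emph{equal-fiber} statement: for any basic set $Z$ with $Z_E=D$, all the fibers $Z_h=\{t\in P:(h,t)\in Z\}$ $(h\in D)$ have the same size. To prove it I would apply the canonical epimorphism $\pi\colon G\to G/P\cong E$. Then
\[
\pi(\underline{Z})=\sum_{h\in D}|Z_h|\,\overline{h}\in\mathcal{A}_{G/P}\cong\mathcal{A}_E,
\]
its support is the single basic set $\overline{D}$ (the image of $D$), and every element of an $S$-ring has constant coefficient on each basic set; hence $|Z_h|$ is independent of $h$. The dual statement for the $E$-fibers over $Z_P$ follows symmetrically. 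I would also record that, since $\gcd(|E|,|P|)=1$, every $\mathcal{A}$-subgroup of $E\times P$ splits as a direct product, so $\langle Z\rangle=\langle D\rangle\times\langle Z_P\rangle$ and $\rad(Z)=(\rad(Z)\cap E)\times(\rad(Z)\cap P)$, with $\rad(Z)\cap P=\{e\}$ in our situation.

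Next I would align the $P$-projections using multipliers. By Lemma~\ref{burn} every power map $\sigma_m$ with $\gcd(m,|G|)=1$ is a Cayley isomorphism permuting $\mathcal{S}(\mathcal{A})$, and by the Chinese remainder theorem one may take $m\equiv 1\pmod 4$ (so that $\sigma_m$ is the identity on $E$, fixing $D$ pointwise) with $m$ ranging over all residues modulo $p$. Thus the multipliers that fix $E$ realise the whole of $\aut(P)$ on the $P$-coordinate. Choosing such an $m$ with $(X_P)^{\sigma_m}=Y_P$ and replacing $X$ by $X^{(m)}$, I may assume from now on that $X_P=Y_P=:T$ while still $X_E=Y_E=D$. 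Both $X$ and $Y$ then lie inside the ``box'' $D\times T$, surject onto each factor, and are regular in each fiber direction by the equal-fiber statement.

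The remaining and decisive step is to produce a \emph{single} multiplier $m'$, stabilising both $D$ and $T$, with $X^{(m')}=Y$; equivalently, to find one element of $M$ carrying $X_h$ onto $Y_h$ simultaneously for every $h\in D$. I expect this uniformity across the fibers to be the \textbf{main obstacle}. Indeed, the $T$-stabilising multipliers induce only the regular action of $M$ on $T$, and this action on an \emph{arbitrary} partition of $D\times T$ into sub-basic-sets need not be transitive; it genuinely fails for a decoupled $S$-ring such as a proper tensor product $\mathcal{A}_E\otimes\mathbb{Z}P$. I would therefore use the operative structure of $\mathcal{A}$ in this section---the decomposable cases being already separable through Lemmas~\ref{sepwr} and~\ref{schurtens}, so that the lemma is needed only when $\mathcal{A}$ admits no such decomposition---and extract the common multiplier by analysing the $\{e\}\times P$-component of $\underline{X}\,\underline{Y}^{-1}$, which lies in $\mathcal{A}_P$ and hence is constant on $M$-orbits. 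Comparing this with the coherence (structure-constant) conditions, together with the fact that $\rad(X)=\rad(Y)$ makes the fibers constant on $\rad$-cosets of $D$, should force $X_h=(Y_h)^{(m')}$ for one and the same $m'\in M$ across all $h$, which finishes the proof.
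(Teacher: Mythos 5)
Your proposal stalls exactly where you say it does: the ``decisive step'' of producing one multiplier $m'$ that works simultaneously in all fibers is never actually carried out. The closing paragraph is a plan, not a proof --- the claim that analysing the $\{e\}\times P$-component of $\underline{X}\,\underline{Y}^{-1}$ together with the coherence conditions ``should force'' $X_h=(Y_h)^{(m')}$ for one and the same $m'$ is not substantiated, and some of its ingredients are merely asserted (why is $\rad(X)=\rad(Y)$? why is $\rad(Z)\cap P$ trivial?). Retreating to the indecomposable/cyclotomic case is also not available to you as stated, since the lemma is formulated (and later applied, e.g. inside the proof of Lemma~\ref{generate}) for an arbitrary $\mathcal{A}$ having $E$ and $P$ as $\mathcal{A}$-subgroups. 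So as written the argument is incomplete.

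The obstacle you anticipate dissolves once you combine two facts you already have. First, since $E$ is an $\mathcal{A}$-subgroup, a basic set either lies in $E$ or is disjoint from $E$; in the latter case every element of $X$ has nontrivial $P$-coordinate. Second, as you yourself observe, the power maps $\tau_m\colon g\mapsto g^m$ with $m\equiv 1\pmod{\exp(E)}$ fix $E$ pointwise while realizing all of $\aut(P)$ on the $P$-coordinate, and each is a Cayley isomorphism of $\mathcal{A}$ onto itself by Lemma~\ref{burn} (equivalently, $1\times\aut(P)$ is central in $\aut(G)$). Hence for $X\nsubseteq E$ with $X_E=D$ the union $\bigcup_m X^{(m)}$ over these $m$ equals $D\times P^{\#}$: it is contained there because each $\tau_m$ fixes $D$ pointwise, and it covers it because $\aut(P)$ acts transitively on $P^{\#}$. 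Since the sets $X^{(m)}$ are basic sets and $Y$ is a basic set contained in $D\times P^{\#}$, the disjointness of distinct basic sets forces $Y=X^{(m)}$ for some $m$. This is precisely the paper's (three-line) proof; no fiber-size analysis, projection alignment, or appeal to the classification of $\mathcal{A}$ is needed. (Both you and the paper tacitly exclude the degenerate situation where exactly one of $X,Y$ lies in $E$, where the conclusion fails for cardinality reasons; in all applications both sets lie outside $E$.)
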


\begin{proof}
The statement of the lemma follows from Lemma~\ref{burn} because the group $1\times \aut(P)$ is contained in the center of $\aut(G)$ and  $\aut(P)$ acts transitively on $P^\#$.
\end{proof}

From \cite[Theorem~5.1]{EP4} it follows that $\mathcal{A}_P=\cyc(K,P)$ for some $K\leq \aut(P)$. Since $|P|=p$, the group $\aut(P)$ is cyclic and hence $K$ is also cyclic. Let  $\theta$ be a generator of $K$. It can be checked in a straightforward way  that $\mathcal{A}_E=\mathbb{Z}E$, or $\mathcal{A}_E=\mathbb{Z}C_2\wr\mathbb{Z}C_2$, or $\rk(\mathcal{A}_E)=2$. If $E=E_2$ and $\rk(\mathcal{A}_E)=2$ then $\mathcal{A}_E$ is not cyclotomic because in this case $E^\#\in \mathcal{S}(\mathcal{A}_E)$ and $E^\#$ contains elements of orders~2 and~4. A straightforward check implies that in other cases $\mathcal{A}_E\cong_{\cay}\cyc(\langle \sigma \rangle,E)$, where $\sigma\in \aut(E)$ is trivial or one of the automorphisms listed in Table~1.

\begin{center}

{\small
\begin{tabular}{|l|l|l|l|}
  \hline
  % after \\: \hline or \cline{col1-col2} \cline{col3-col4} ...
  $E$ & $\sigma$ & $|\sigma|$ & $\mathcal{A}_E$    \\
  \hline
  $E_1$ & $\sigma_1:(a,b)\rightarrow (b,ab)$ & $3$ & $\rk(\mathcal{A}_E)=2$\\ \hline
  $E_1$ & $\sigma_2:(a,b)\rightarrow (b,a)$ & $2$ & $\mathbb{Z}C_2\wr \mathbb{Z}C_2$\\ \hline
  $E_2$ & $\sigma_3:c\rightarrow c^{-1}$ & $2$ & $\mathbb{Z}C_2\wr \mathbb{Z}C_2$\\ \hline
\end{tabular}
}

Table 1.

\end{center}

Suppose that $|\sigma|$ divides $|K|$. Denote the subgroup of $K$ of index $|\sigma|$  by $M$. Put 
$$\psi:\sigma^i\rightarrow M\theta^i,~i=0,\ldots,|\sigma|-1.$$ 
Clearly, $\psi$ is an isomorphism from $\langle \sigma \rangle$ to $K/M$.

\begin{lemm}\label{Sring2}
If $\mathcal{A}\neq \mathcal{A}_E\otimes \mathcal{A}_P$  then $\mathcal{A}_E\cong_{\cay}\cyc(\langle \sigma \rangle,E)$, $|\sigma|$ divides $|K|$, and $\mathcal{A}\cong_{\cay}\cyc(A(\langle \sigma \rangle,K,\psi),G)$, where $\sigma\in \aut(E)$ is one of the automorphisms listed in Table~1.
\end{lemm}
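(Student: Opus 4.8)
The plan is to prove directly that $\mathcal{A}$ coincides with the cyclotomic $S$-ring $\cyc(\langle(\sigma,\theta)\rangle,G)$ inside $\aut(E)\times\aut(P)=\aut(G)$, and then to recognise $\langle(\sigma,\theta)\rangle$ as the subdirect product $A(\langle\sigma\rangle,K,\psi)$. I would first reduce. By Lemma~\ref{tenspr} we always have $\mathcal{A}\geq\mathcal{A}_E\otimes\mathcal{A}_P$, with equality as soon as one factor is a full group ring; so $\mathcal{A}\neq\mathcal{A}_E\otimes\mathcal{A}_P$ forces $\mathcal{A}_E\neq\mathbb{Z}E$ and $\mathcal{A}_P\neq\mathbb{Z}P$, whence $K\neq 1$. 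By Lemma~\ref{tenspr}(1) each basic set $X$ satisfies $X_E\in\mathcal{S}(\mathcal{A}_E)$, $X_P\in\mathcal{S}(\mathcal{A}_P)$ and $X\subseteq X_E\times X_P$; since $\mathcal{A}\supsetneq\mathcal{A}_E\otimes\mathcal{A}_P$, some product $S\times T$ with $S\in\mathcal{S}(\mathcal{A}_E)$ and $T$ a nontrivial $K$-orbit splits into at least two basic sets. A singleton $S$ cannot split (a basic set inside $\{g\}\times T$ has $P$-projection a nonempty $\mathcal{A}_P$-subset of $T$, hence equal to $T$), so $2\leq|S|\leq 3$.

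Next I would fix such a splitting and a piece $X\subseteq S\times T$, writing $Y_g=\{y\in P:(g,y)\in X\}$ for its fibres. The map $\mu_0=(\id_E,\theta)$ is realised by the multiplier $g\mapsto g^{m}$ with $m\equiv 1\pmod{\exp E}$ and $m\equiv\theta\pmod p$, so by Lemma~\ref{burn} it preserves $\mathcal{S}(\mathcal{A})$ and fixes $S\times T$; hence $\langle\mu_0\rangle$ permutes the pieces. Let $M=\langle\theta^{\,n_0}\rangle$ be the stabiliser of $X$ in $\langle\mu_0\rangle$, of index $n_0$ in $K$. Then $M$ fixes each $Y_g$, so $Y_g$ is $M$-invariant; and since the translates $Y_g^{\,\theta^i}$ ($0\le i<n_0$) are pairwise disjoint (distinct basic sets are disjoint) and cover the $K$-orbit $T$ (on which $K$ acts regularly), each $Y_g$ is a single $M$-block of size $|M|$ and the $n_0$ translates of $X$ exhaust the pieces. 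A piece is thus encoded by an index function $j\colon S\to K/M$ sending $g$ to the block $Y_g$, and $\mu_0$ shifts $j$ by a global constant.

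Two facts then pin down $j$. First, $X_P\in\mathcal{S}(\mathcal{A}_P)$ and $\emptyset\neq X_P\subseteq T$ give $X_P=T$, i.e. $j$ is onto. Second, $\underline{S}\in\mathcal{A}$, so $\underline{S}\,\underline{X}\in\mathcal{A}$; as $S=S^{-1}$, the coefficient of $\underline{S}\,\underline{X}$ at a point $(e,y)$ of the basic set $\{e\}\times T$ (a basic set of $\mathcal{A}$ because $P$ is an $\mathcal{A}$-subgroup) equals $|j^{-1}(\text{block of }y)|$, which must be constant along $\{e\}\times T$. Hence $j$ is equidistributed, so $n_0\mid|S|$; with $n_0\geq 2$ and $|S|\leq 3$ this yields $n_0=|S|$ and $j$ a bijection. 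If now $E=E_2$ and $\rk(\mathcal{A}_E)=2$, then $S=\{c,c^2,c^3\}$ and the extra multiplier $(\sigma_3,\id_P)$ (available since $\sigma_3\colon c\mapsto c^{-1}$ is a power map) sends $X$ to a piece with index function $j\circ\sigma_3=j+c_0$; evaluating at the $\sigma_3$-fixed point $c^2$ gives $c_0=0$, so $j(c)=j(c^3)$, contradicting injectivity. This eliminates the non-cyclotomic rank-$2$ scheme over $C_4$, leaving $\mathcal{A}_E\cong_{\cay}\cyc(\langle\sigma\rangle,E)$ with $\sigma$ from Table~1 and $S$ the $\langle\sigma\rangle$-orbit of size $|\sigma|=n_0$.

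In each surviving case a bijection $j$ on the single $\langle\sigma\rangle$-orbit $S$ automatically intertwines $\sigma$ with the shift on $K/M$ (for $E_2$ this is the computation above; for $E_1$ any $2$- or $3$-cycle is conjugated to $\pm1$), so every piece is a $\langle(\sigma,\theta)\rangle$-orbit, as are the unsplit basic sets; therefore $\mathcal{A}=\cyc(\langle(\sigma,\theta)\rangle,G)$. Since $|\sigma|=n_0=[K:M]$ we get $|\sigma|\mid|K|$, and $\langle(\sigma,\theta)\rangle=A(\langle\sigma\rangle,K,\psi)$ with $\psi\colon\sigma^i\mapsto M\theta^i$; the ambiguity $\sigma\leftrightarrow\sigma^{-1}$ is absorbed into a Cayley isomorphism, giving $\mathcal{A}\cong_{\cay}\cyc(A(\langle\sigma\rangle,K,\psi),G)$. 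The main obstacle I expect is the fibre analysis together with the determination $n_0=|S|=|\sigma|$: proving that each fibre is a single $M$-block and that the index function is a bijection is exactly what ties the splitting parameter to $\sigma$. The accompanying subtlety is the exclusion of the rank-$2$ scheme over $C_4$, which is genuinely non-cyclotomic and can be ruled out only through the power-map multiplier $(\sigma_3,\id_P)$ fixing $c^2$.
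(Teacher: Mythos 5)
Your strategy is genuinely different from the paper's: the paper quotes the classification results of \cite{EKP} and \cite{EP3} to conclude that $\mathcal{A}$ is cyclotomic (which in particular kills the non-cyclotomic rank-$2$ scheme over $C_4$) and then simply classifies the nontrivial subdirect products of $\langle \sigma\rangle$ and $K$, whereas you reconstruct the cyclotomic structure by hand from a fibre analysis of a basic set inside a split product $S\times T$. That analysis is sound: the action of $\langle(\id_E,\theta)\rangle$ on the pieces via Lemma~\ref{burn}, the identification of each fibre $Y_g$ with a single $M$-block, the surjectivity of the index function $j$ from $X_P=T$ (Lemma~\ref{tenspr}), its equidistribution from the coefficient of $\underline{S}\,\underline{X}$ on the basic set $\{e\}\times T$, and the elimination of the rank-$2$ scheme over $C_4$ via the $\sigma_3$-fixed point $c^2$ are all correct.

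The gap is in the final assembly, where you pass from ``each piece of the chosen split product is a $\langle(\sigma,\theta)\rangle$-orbit'' to ``$\mathcal{A}=\cyc(\langle(\sigma,\theta)\rangle,G)$.'' Two things are missing. First, you only know that \emph{some} product $O\times T$ splits; if another product $O\times T'$, for a different $K$-orbit $T'$ on $P^{\#}$, were unsplit, it would be a basic set that is \emph{not} an orbit of the subdirect product (it is a union of $|\sigma|$ such orbits), so your assertion that ``the unsplit basic sets'' are orbits fails precisely there and the desired equality would be false. Second, when $|\sigma|=3$ the bijection $j$ conjugates $\sigma|_S$ to the shift by either $\theta M$ or $\theta^{-1}M$, and a priori different split products could produce opposite signs, in which case the basic sets would not all be orbits of a single subdirect product; absorbing the ambiguity into one Cayley isomorphism only works if the sign is globally consistent. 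Both issues are repaired by Lemma~\ref{conj}, which you never invoke: any two basic sets with $E$-projection equal to $O$ are rationally conjugate, hence have the same cardinality (so once one $O\times T$ splits, every $O\times T_k$ splits, a piece having cardinality $|K|$ rather than $|\sigma||K|$), and since the relevant power maps act trivially on $E_1$ and commute with $\theta$, rational conjugation also preserves the shift direction. With that addition your argument closes and gives a self-contained alternative to the paper's citations.
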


\begin{proof}
If $\mathcal{A}_E=\mathbb{Z}E$ then $\mathcal{A}=\mathcal{A}_E\otimes \mathcal{A}_P$ by Statement~2 of Lemma~\ref{tenspr} and we obtain a contradiction with the assumption of the lemma. So 
$$\mathcal{A}_E=\mathbb{Z}C_2\wr\mathbb{Z}C_2~\text{or}~\rk(\mathcal{A}_E)=2.$$ 
Prove that $\mathcal{A}=\cyc(A^{'},G)$ for some $A^{'}\leq \aut(G)$. If $E=E_1$ then this follows from \cite[p.15-16]{EKP}. Let $E=E_2$. Note that $\mathcal{A}$ is not the proper generalized wreath product of two $S$-rings because $E$ and $P$ are $\mathcal{A}$-subgroups. Since $\mathcal{A}\neq \mathcal{A}_E\otimes \mathcal{A}_P$, we conclude by \cite[Theorem~4.1,Theorem~4.2]{EP3} that $\mathcal{A}=\cyc(A^{'},G)$ for some $A^{'}\leq \aut(G)$. 

Clearly, $\mathcal{A}_E$ is  cyclotomic. So we may assume that $\mathcal{A}_E=\cyc(\langle \sigma \rangle,E)$, where $\sigma\in\{\sigma_1,\sigma_2,\sigma_3\}$. Since  $\mathcal{A}_E=\cyc((A^{'})^E,E)$, $\mathcal{A}_P=\cyc((A^{'})^P,P)$, and $\mathcal{A}\neq \mathcal{A}_E\otimes \mathcal{A}_P$, the group $A^{'}$ is the nontrivial subdirect product of $\langle \sigma\rangle$ and $K$. If $|K|$ is not divisible by~$|\sigma|$ then there are no nontrivial subdirect products of  $\langle \sigma \rangle$ and $K$ because $|\sigma|\in\{2,3\}$. So $|\sigma|$ divides $|K|$. If $|\sigma|=2$ then $A(\langle \sigma \rangle,K,\psi)$ is the unique nontrivial subdirect product of $\langle \sigma \rangle$ and $K$. Therefore $A^{'}=A(\langle \sigma \rangle,K,\psi)$ and we are done. 

Suppose that $|\sigma|=3$. Then $\sigma=\sigma_1$, $E=E_1$, and $\rk(\mathcal{A}_E)=2$. In this case there are exactly two nontrivial subdirect products of $\langle \sigma \rangle$ and $K$:
$$A(\langle \sigma \rangle,K,\psi)~\text{and}~A(\langle \sigma \rangle,K,\xi),$$ 
where $\xi:\sigma^i\rightarrow M\theta^{-i},~i=0,1,2$. So $A^{'}\in\{A(\langle \sigma \rangle,K,\psi),A(\langle \sigma \rangle,K,\xi)\}$. The straightforward check implies that for every involution $\tau\in \aut(E)$ the automorphism $\tau\times 1\in \aut(E)\times \aut(P)$ is a Cayley isomorphism from $A(\langle \sigma \rangle,K,\psi)$ to $A(\langle \sigma \rangle,K,\xi)$. Therefore $\mathcal{A}\cong_{\cay}\cyc(A(\langle \sigma \rangle,K,\psi),G)$ and the statement of the lemma holds.
\end{proof}

Given a group $K\leq \aut(P)$ put $\mathcal{A}_i(K)=\cyc(A(\langle \sigma_i \rangle,K,\psi),G)$, where $i\in\{1,2,3\}$ and $\sigma_i$ is from Table~1. If $K_1,K_2\leq \aut(P)$ and $K_1\neq K_2$ then $\cyc(K_1,P)\ncong_{\alg}\cyc(K_2,P)$ and hence $\mathcal{A}_i(K_1)\ncong_{\alg}\mathcal{A}_j(K_2)$ for all $i,j\in\{1,2,3\}$. 

\begin{lemm}\label{nonisom}
Let $K\leq \aut(P)$. Then  $\mathcal{A}_i(K)\ncong_{\alg}\mathcal{A}_j(K)$ whenever $i\neq j$. 
\end{lemm}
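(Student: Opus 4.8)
The plan is to distinguish the three $S$-rings by numerical invariants that are manifestly preserved by every algebraic isomorphism, separating two regimes: the pairs $\{i,j\}$ containing the index $1$, and the single remaining pair $\{i,j\}=\{2,3\}$. Throughout I would use the description from Lemma~\ref{Sring2}, namely that $\mathcal{A}_i(K)=\cyc(\langle(\sigma_i,\theta)\rangle,G)$ is cyclotomic over $G=E\times P$, so its basic sets are exactly the orbits of the diagonal automorphism $(\sigma_i,\theta)$; and I would use that an algebraic isomorphism preserves the rank of the $S$-ring, the ranks of its restrictions to $\mathcal{A}$-sections, the orders of $\mathcal{A}$-subgroups, and the relation $X=X^{-1}$.

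For a pair containing the index $1$ I would argue by the rank of the $E$-section. Since $\sigma_1$ has order $3$ and permutes the three involutions of $E_1$ cyclically, $\rk((\mathcal{A}_1(K))_E)=2$, whereas $\sigma_2$ and $\sigma_3$ are involutions fixing one nontrivial element of $E$, so $\rk((\mathcal{A}_2(K))_E)=\rk((\mathcal{A}_3(K))_E)=3$ (each is $\mathbb{Z}C_2\wr\mathbb{Z}C_2$). In any abelian group of order $4p$ with $p\geq 3$ the subgroup of order $4$ is unique, and it is an $\mathcal{A}$-subgroup under the standing assumption; hence an algebraic isomorphism would carry $E$ to $E$ and induce an algebraic isomorphism of the $E$-sections, forcing their ranks to agree. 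As $2\neq 3$, this rules out both $\mathcal{A}_1(K)\cong_{\alg}\mathcal{A}_2(K)$ and $\mathcal{A}_1(K)\cong_{\alg}\mathcal{A}_3(K)$.

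The hard part will be the pair $\{2,3\}$, where $\mathcal{A}_2(K)$ and $\mathcal{A}_3(K)$ share the same rank, the same multiset of basic-set sizes, the same $E$-section and the same $P$-section, so every coarse invariant coincides. Here I would count the self-inverse basic sets, those with $X=X^{-1}$, which is preserved by every algebraic isomorphism. Writing $L$ for the order-$2$ subgroup of $E$ (so $L=\langle ab\rangle$ or $L=\langle c^2\rangle$) and $d=|K|$, one checks that all basic sets contained in $L\times P$, together with the basic set $\{a,b\}$ (resp.\ $\{c,c^3\}$), are self-inverse in both rings and contribute the same number $3+2(p-1)/d$; this uses that inversion lies in $K$ because $2\mid d$, so every $\theta$-orbit on $P^{\#}$ is inverse-closed. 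The decisive difference comes from the ``mixed'' basic sets, those projecting onto $\{a,b\}$ (resp.\ $\{c,c^3\}$) and onto a single $\theta$-orbit of $P^{\#}$: over each such orbit there are exactly two of them, and a direct computation of the inverse—using that inversion fixes $a$ and $b$ but sends $c\mapsto c^{3}$, which inserts an extra shift by $\sigma_3$—shows that the two mixed sets are both self-inverse in one ring and form an inverse pair in the other, the outcome depending only on $d\bmod 4$. Consequently the total numbers of self-inverse basic sets differ by exactly $2(p-1)/d>0$, whence $\mathcal{A}_2(K)\ncong_{\alg}\mathcal{A}_3(K)$.

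The genuine obstacle is precisely this last step: since $C_4$ and $C_2\times C_2$ are indistinguishable at the level of the $E$-section $\mathbb{Z}C_2\wr\mathbb{Z}C_2$, the difference between them must be extracted from the interaction between $E$ and $P$. The self-inverse count is the natural place where the identity $c\cdot c=c^2$ (an involution) versus $a\cdot a=e$ leaves a combinatorial trace. I would verify the parity bookkeeping and the factor $2(p-1)/d$ on a small case such as $p=3$, $d=2$, where $\mathcal{A}_2(K)$ has $5$ self-inverse basic sets and $\mathcal{A}_3(K)$ has $7$, before writing out the general argument.
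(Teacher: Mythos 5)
Your proposal is correct, and while the first half (separating $i=1$ from $i\in\{2,3\}$ via the rank of the induced $S$-ring on the unique subgroup of order~$4$) coincides with the paper's argument, your treatment of the crucial pair $\{2,3\}$ is genuinely different. The paper picks a basic set $X$ with $X_{E_1}=\{a,b\}$, writes $X=aX_1\cup bX_2$, and observes that the part of $\underline{X}^2$ supported on $P$ is $\underline{X_1}^2+\underline{X_2}^2$ (since $a^2=b^2=e$), giving an odd structure constant $c_{X,X}^{Y}$ for $Y=X_P^{(2)}$, whereas for the image $X^{\varphi}=cX_1'\cup c^{-1}X_2'$ the part supported on $P$ is $2\underline{X_1'}\,\underline{X_2'}$, so all such constants are even --- a single parity obstruction with no case analysis. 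You instead count self-inverse basic sets, exploiting that inversion fixes $a,b$ but acts as $\sigma_3$ on $\{c,c^3\}$; I checked your bookkeeping and it is sound: the unique involution of $\aut(P)$ lies in $K$ and lies in $M$ iff $4\mid |K|$, so over each $K$-orbit on $P^\#$ the two mixed basic sets are either both symmetric or an inverse pair, with opposite outcomes in $\mathcal{A}_2(K)$ and $\mathcal{A}_3(K)$, yielding the difference $2(p-1)/|K|$ you state (and your $p=3$, $|K|=2$ sanity check of $5$ versus $7$ is right). Your invariant is more elementary and purely combinatorial (no multiplication in the group ring), at the cost of a full enumeration of $\mathcal{S}(\mathcal{A}_i(K))$ and a case split on $|K|\bmod 4$; the paper's is shorter but relies on a computation of structure constants. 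To finalize yours you would only need to write out the orbit enumeration of $A(\langle\sigma_i\rangle,K,\psi)$ explicitly and record that algebraic isomorphisms preserve the relation $X=X^{-1}$, which the paper's preliminaries already supply via $(X^{-1})^{\varphi}=(X^{\varphi})^{-1}$.
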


\begin{proof}
Note that for every $i\in\{1,2,3\}$ the group $E$ is the unique $\mathcal{A}_i(K)$-subgroup of order~$4$, $\rk(\mathcal{A}_1(K)_E)=2$, and $\rk(\mathcal{A}_2(K)_E)=\rk(\mathcal{A}_3(K)_E)=3$. So $\mathcal{A}_1(K)\ncong_{\alg}\mathcal{A}_2(K)$ and $\mathcal{A}_1(K)\ncong_{\alg}\mathcal{A}_3(K)$. 

Now let $\mathcal{A}$ and $\mathcal{A}^{'}$ be $S$-rings over the groups $G=E_1\times P$ and $G^{'}=E_2\times P$ respectively, $\mathcal{A}\cong_{\cay}\mathcal{A}_2(K)$, and $\mathcal{A}^{'}\cong_{\cay}\mathcal{A}_3(K)$. Assume that $\mathcal{A}\cong_{\alg}\mathcal{A}^{'}$ and $\varphi:\mathcal{A}\rightarrow \mathcal{A}^{'}$ is an algebraic isomorphism. Then $E_1^{\varphi}$ and $P^{\varphi}$ are  $\mathcal{A}^{'}$-subgroups of orders $4$ and $p$ respectively. So $E_1^{\varphi}=E_2$ and $P^{\varphi}=P$. Let $X\in \mathcal{S}(\mathcal{A})$ such that $X\nsubseteq E_1$ and $X_{E_1}=\{a,b\}$. Then $X=aX_1\cup bX_2$, where $X_1,X_2\subseteq P$. From Statement~1 of Lemma~\ref{tenspr} it follows that $X_P=X_1\cup X_2\in \mathcal{S}(\mathcal{A}_P)$. Lemma~\ref{burn} implies that $Y=X_P^{(2)}\in \mathcal{S}(\mathcal{A}_P)$. Clearly,
$$\underline{X}^2=\underline{X_1}^2+\underline{X_2}^2+2ab\underline{X_1}~\underline{X_2}.$$
Therefore
$$c_{X,X}^{Y}~\text{is odd}.~\eqno(2)$$
Note that $\langle X \rangle=G$. So $\langle X^{\varphi} \rangle=G^{'}$ by the properties of an algebraic isomorphism and hence $X^{\varphi}=cX_1^{'}\cup c^{-1}X_2^{'}$, where $X_1^{'},X_2^{'}\subseteq P$. It is easy to see that
$$(\underline{X}^{\varphi})^2=2\underline{X_1}^{'}~\underline{X_2}^{'}+c^2((\underline{X_1}^{'})^2+(\underline{X_2}^{'})^2).$$
Therefore for every $Y^{'}\in\mathcal{S}(\mathcal{A}^{'}_P)$ the number $c_{X^{\varphi},X^{\varphi}}^{Y^{'}}$ is even. On the other hand, $(Y)^{\varphi}\in\mathcal{S}(\mathcal{A}^{'}_P)$ and (2) yields that $c_{X^{\varphi},X^{\varphi}}^{Y^{\varphi}}=c_{X,X}^{Y}$ is odd, a contradiction. Thus, $\mathcal{A}\ncong_{\alg}\mathcal{A}^{'}$ and the lemma is proved.
\end{proof}

Let $\mathcal{A}\cong_{\cay} \mathcal{A}_i(K)$ for some $K\leq \aut(P)$ and $i\in\{1,2,3\}$.  From the discription of $\sigma_i$  given in Table~1 it follows that the group $\langle \sigma_i \rangle$ has the unique regular orbit $O\in\mathcal{S}(\mathcal{A}_E)$. Following~\cite{EKP}, we say that $X\in \mathcal{S}(\mathcal{A})$ is a \emph{highest} basic set if $X$ lies outside $E\cup P$ and $X_E=O$. Highest basic sets of $\mathcal{A}$ exist. Indeed, if $X\in \mathcal{S}(\mathcal{A})$ such that $gx\in X$, where $g\in O$ and $x\in P^\#$, then $X$ lies outside $E\cup P$ and $X_E=O$ by Statement~1 of Lemma~\ref{tenspr}. So $X$ is highest.

\begin{lemm}\label{generate}
Suppose that $\mathcal{A}\cong_{\cay} \mathcal{A}_i(K)$ for some $K\leq \aut(P)$ and $i\in\{1,2,3\}$. Then the following statements hold:

$(1)$ a basic set $X$ of $\mathcal{A}$ is highest if and only if $\langle X \rangle=G$;

$(2)$ if $X\in \mathcal{S}(\mathcal{A})$ is highest then $\mathcal{A}=\langle \underline{X} \rangle$.
\end{lemm}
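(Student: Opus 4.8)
The plan is to prove the two statements using that $\gcd(|E|,|P|)=\gcd(4,p)=1$ and that, in each of the three cases of Table~1, $O$ is the unique nontrivial $\langle\sigma_i\rangle$-orbit in $E$ generating $E$.

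For Statement~(1), first suppose $X$ is highest. Then every element of $X$ has nontrivial $E$- and $P$-coordinates, so $X_E=O$ generates $E$ while $X_P\subseteq P^\#$ generates $P$ (here $|P|=p$ is prime). Thus $\langle X\rangle$ projects onto both $E$ and $P$, and since $\gcd(|E|,|P|)=1$ any subgroup of $E\times P$ surjecting onto both factors equals $G$; hence $\langle X\rangle=G$. Conversely, suppose $\langle X\rangle=G$. Then $X_E$ generates $E$; but $X_E$ is a $\langle\sigma_i\rangle$-orbit (the image under projection to $E$ of the $A(\langle\sigma_i\rangle,K,\psi)$-orbit $X$), and the only nontrivial such orbit generating $E$ is $O$, so $X_E=O$. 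Moreover $E$ and $P$ are $A(\langle\sigma_i\rangle,K,\psi)$-invariant, hence unions of orbits, so the orbit $X$ is either contained in or disjoint from each of them; since $\langle X\rangle=G$ gives $X\not\subseteq E$ and $X\not\subseteq P$, the set $X$ avoids $E\cup P$. Therefore $X$ is highest.

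For Statement~(2), put $\mathcal B=\langle\underline X\rangle\subseteq\mathcal A$; I will show $\rk(\mathcal B)=\rk(\mathcal A)$, which together with $\mathcal B\subseteq\mathcal A$ gives $\mathcal B=\mathcal A$. First I recover $E$ and $P$ as $\mathcal B$-subgroups via the Schur multiplier theorem. Applying Lemma~\ref{sch} with the prime $p$ and $H=P$: each nonempty set $X\cap Px$ is one of the $E$-fibres of $X$, whose size divides $|K|$ and hence $p-1$, so is coprime to $p$; thus the defining condition holds for every $x\in X$ and $X^{[p]}=\{x^p:x\in X\}=O$. Consequently $O$, and with it $E=\langle O\rangle$, is a $\mathcal B$-subgroup, and $\mathcal B_E=\mathcal A_E$ (for $i\in\{2,3\}$ the singleton $\{ab\}$, resp.\ $\{c^2\}$, is the support of $\underline O^2-2\underline{\{e\}}$, hence a basic set). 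Applying Lemma~\ref{sch} with the prime $2$ now produces a nonempty $\mathcal B$-subset of $P^\#$: for $E=E_1$ directly, since there $X^{[2]}=X_P^{(2)}\subseteq P^\#$; and for $E=E_2$ after a second application of Lemma~\ref{sch} with the prime $2$ inside the $\mathcal B$-subgroup $\langle X^{[2]}\rangle=\langle c^2\rangle\times P$. As $|P|=p$ is prime, the subgroup generated by this subset is $P$, so $P$ is a $\mathcal B$-subgroup.

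It remains to identify the structure of $\mathcal B$. Since $X$ is highest, its $E$-fibres are proper subsets of $X_P$, so $X\neq O\times X_P$ and therefore $\mathcal B\neq\mathcal B_E\otimes\mathcal B_P$. With $E$ and $P$ being $\mathcal B$-subgroups, Lemma~\ref{Sring2} applies to $\mathcal B$ and yields $\mathcal B\cong_{\cay}\mathcal A_i(K_1)$, where $\mathcal B_P=\cyc(K_1,P)$ and the index $i$ is the same as for $\mathcal A$ because $\mathcal B_E=\mathcal A_E$ determines $\sigma_i$. Now $X$ is still highest in $\mathcal B$ (its defining properties $X_E=O$ and $X\cap(E\cup P)=\emptyset$ are intrinsic, and $X$ is a genuine basic set of $\mathcal B$, being a single basic set of the finer $S$-ring $\mathcal A$). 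Transporting along the Cayley isomorphism $\mathcal B\to\mathcal A_i(K_1)$ and using that $P$ is characteristic in $G$, the projection $X_P$ is a full $K_1$-orbit, so $|X_P|=|K_1|$; on the other hand, in $\mathcal A$ the set $X_P$ is a full $K$-orbit, so $|X_P|=|K|$. Since $\mathcal B\subseteq\mathcal A$ forces $K\subseteq K_1$, the equality $|K_1|=|K|$ gives $K_1=K$, whence $\mathcal B\cong_{\cay}\mathcal A_i(K)\cong_{\cay}\mathcal A$ and $\rk(\mathcal B)=\rk(\mathcal A)$, as required. The main obstacle is precisely this last step: Lemma~\ref{Sring2} identifies $\mathcal B$ only up to Cayley isomorphism, so one must reconcile this with the literal inclusion $\mathcal B\subseteq\mathcal A$ and determine the parameter $K_1$; the size count $|X_P|=|K|=|K_1|$, available because $K\le\aut(C_p)$ acts semiregularly on $P^\#$, is what pins down $K_1=K$ and closes the argument.
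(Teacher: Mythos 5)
Your proof is correct and follows the paper's own strategy for both statements: identify $O$ as the unique $\mathcal{S}(\mathcal{A}_E)$-class generating $E$, then for Statement~(2) recover $E$ and $P$ as subgroups of $\mathcal{B}=\langle \underline{X}\rangle$ via Lemma~\ref{sch}, observe that $X\in\mathcal{S}(\mathcal{B})$ and $\mathcal{B}\neq\mathcal{B}_E\otimes\mathcal{B}_P$, and invoke Lemma~\ref{Sring2} for $\mathcal{B}$; the only divergence is the endgame, where you pin down $K_1=K$ by the cardinality count $|X_P|=|K|=|K_1|$ and finish by comparing ranks, whereas the paper finishes by matching the basic sets of $\mathcal{B}$ and $\mathcal{A}$ directly via rational conjugacy (Lemmas~\ref{conj} and~\ref{burn}). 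A small bonus of your write-up: the second application of Lemma~\ref{sch} for $E=E_2$ correctly accounts for the fact that $X^{[2]}=c^2X_P^{(2)}$ generates $\langle c^2\rangle\times P$ rather than $P$, a point the paper's proof glosses over.
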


\begin{proof}
Let $O\in \mathcal{S}(\mathcal{A}_E)$ be a regular orbit of $\langle \sigma_i \rangle$. The straightforward check implies that $\langle Y \rangle=E$ for $Y\in \mathcal{S}(\mathcal{A}_E)$ if and only if $Y=O$. So $\langle X \rangle =G$ for $X\in\mathcal{S}(\mathcal{A})$ if and only if $X$ is highest and Statement~1 of the lemma is proved. 

Now let $X$ be a highest basic set of $\mathcal{A}$ and $\mathcal{B}=\langle \underline{X} \rangle$. Prove that $\mathcal{A}=\mathcal{B}$. Clearly, $\mathcal{A}\geq\mathcal{B}$. On the one hand, $X$ is a union of some basic sets of $\mathcal{B}$ because $\underline{X}\in \mathcal{B}$. On the other hand, $X$ is contained in some basic set of $\mathcal{B}$ because $\mathcal{A}\geq\mathcal{B}$. So $X\in \mathcal{S}(\mathcal{B})$. 

From   Lemma~\ref{Sring2} it follows that: (1) $|xE\cap X|=1$; (2) $|xP\cap X|=|K|/3$ if $i=1$ and $|xP\cap X|=|K|/2$ if $i\in\{2,3\}$. Therefore $O=X^{[p]}$ and $X^{[2]}$ are $\mathcal{B}$-sets by Lemma~\ref{sch}. So $E=\langle O\rangle$ and $P=\langle X^{[2]}\rangle$ are $\mathcal{B}$-subgroups. Statement~1 of Lemma~\ref{tenspr} implies that $X_E,X_P\in\mathcal{S}(\mathcal{B})$ and hence 
$$\mathcal{B}_E=\mathcal{A}_E~\text{and}~\mathcal{B}_P=\mathcal{A}_P.~\eqno(3)$$
Since $X\in \mathcal{S}(\mathcal{B})$ and $X\neq X_E\times X_P$, we obtain $\mathcal{B}\neq \mathcal{B}_E\otimes \mathcal{B}_P$. Therefore Lemma~\ref{Sring2} holds for $\mathcal{B}$. The set $X$ is also a highest basic set of $\mathcal{B}$. Let $Y\in \mathcal{S}(\mathcal{B})$ outside $E\cup P$. If $Y$ is highest then $Y=X^{(m)}$ for some $m$ coprime to $|G|$ by Lemma~\ref{conj}. So $Y\in \mathcal{S}(\mathcal{A})$ by Lemma~\ref{burn}. If $Y$ is nonhighest then $Y=Y_E\times Y_P$. Due to (3), we conclude that $Y\in \mathcal{S}(\mathcal{A})$ . Thus $\mathcal{B}=\mathcal{A}$ and Statement~2 of the lemma is proved.
\end{proof}

\section{Proof of Theorem~\ref{main}}

In this section we keep the notations from the previous one. We start the proof with the following lemma which implies that every proper section of $G$ is separable.

\begin{lemm}\label{cycp}
The groups $C_2\times C_2$, $C_p$, and $C_{2p}$, where $p$ is a prime, are separable.
\end{lemm}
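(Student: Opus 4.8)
The plan is to handle the three groups separately, in each case reducing to $S$-rings whose separability is already established. For $C_p$, where $p$ is prime, I would first recall that every $S$-ring over a group of prime order is either the full group ring $\mathbb{Z}C_p$ or has rank $2$ (this is a classical consequence of Schur's theorem, since any nontrivial proper $S$-subgroup would have order properly dividing $p$). Both extremes are separable with respect to the class of all groups by the remark following Lemma~\ref{uniq}, so $C_p$ is separable. The only mild point to note is that the target group of an algebraic isomorphism, being abelian of order $p$, is forced to be $C_p$ itself, so the argument stays inside $\mathcal{K}_A$.

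For $C_2\times C_2$, I would enumerate the $S$-rings directly: up to Cayley isomorphism there are only a few, and each is either $\mathbb{Z}(C_2\times C_2)$, of rank $2$, or a proper wreath/tensor product of $S$-rings over subgroups of order $2$. Each $S$-ring over $C_2$ is either $\mathbb{Z}C_2$ or has rank~2, hence separable; so the decomposable cases follow from Lemma~\ref{schurtens} (tensor product) and Lemma~\ref{sepwr} (wreath product), while the two extreme cases are separable by the remark after Lemma~\ref{uniq}. The only care needed is to check the hypothesis $\aut(\mathcal{A}_U)^{U/L}=\aut(\mathcal{A}_{U/L})$ in the wreath case, which is immediate here because every relevant section has order~$2$ and thus a unique $S$-ring structure, whose automorphism group surjects onto that of the quotient.

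For $C_{2p}$, I would split on whether $p=2$ or $p$ is odd. If $p=2$ then $C_{2p}=C_4$ is a cyclic $2$-group, which is separable by the results of \cite{Ry} cited in the introduction. If $p$ is odd, then $C_{2p}\cong C_2\times C_p$ with the two factors of coprime order; here the key structural input is that, by Lemma~\ref{Sring1}-type reasoning (or directly by the theory of $S$-rings over cyclic groups of order a product of two distinct primes), every $S$-ring over $C_{2p}$ is either of rank~$2$, equal to the full group ring, the tensor product $\mathcal{A}_{C_2}\otimes\mathcal{A}_{C_p}$, or a proper generalized wreath product with respect to a section of one of the two prime-order subgroups. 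In each of these cases separability follows by combining the separability of $C_2$ and $C_p$ with Lemma~\ref{schurtens}, Lemma~\ref{sepwr}, and the rank-$2$/full-ring remark.

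The main obstacle I anticipate is the wreath-product case for $C_{2p}$, where Lemma~\ref{sepwr} requires the extra condition $\aut(\mathcal{A}_U)^{U/L}=\aut(\mathcal{A}_{U/L})$ rather than mere separability of the sections. Verifying this amounts to checking that automorphisms of the $S$-ring on the larger section restrict \emph{onto} the full automorphism group of the quotient $S$-ring; for the small cyclic sections arising here (orders $2$, $p$, or $2p$) the involved $S$-rings are so constrained—being either full group rings, rank-$2$ rings, or cyclotomic—that this surjectivity can be read off directly, but it is the one spot where an honest verification, rather than a citation, is needed.
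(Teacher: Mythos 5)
Your treatment of $C_p$ contains a genuine error. You claim that every $S$-ring over a group of prime order is either $\mathbb{Z}C_p$ or of rank~$2$, deducing this from the absence of nontrivial proper $\mathcal{A}$-subgroups. That deduction is invalid: having no nontrivial proper $\mathcal{A}$-subgroups does not force the rank to be $2$ or $p$. Indeed, for any proper nontrivial $K\leq\aut(C_p)$ the cyclotomic $S$-ring $\cyc(K,C_p)$ (e.g.\ the Paley scheme over $C_5$ or $C_7$, with basic sets the orbits of the squaring subgroup) has rank strictly between $2$ and $p$ and no nontrivial proper $\mathcal{A}$-subgroups. Consequently the separability of $C_p$ cannot be dispatched by the remark following Lemma~\ref{uniq}; it is a nontrivial result, and the paper obtains it by citing \cite[Theorem 1.3]{EP5} on circulant $S$-rings. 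Without that input (or an independent argument that algebraic isomorphisms between cyclotomic $S$-rings over $C_p$ are induced by combinatorial ones), your proof of the first and hence also the third part of the lemma does not go through, since the $C_{2p}$ case relies on the separability of $C_p$ for its tensor and wreath factors.

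The rest of your plan is essentially the paper's route: for $C_2\times C_2$ the paper simply cites \cite[Theorem 1]{Ry} rather than enumerating, and for $C_{2p}$ with $p$ odd it reduces, via \cite[Theorems 4.1 and 4.2]{EP3} together with Lemma~\ref{tenspr} for the cyclotomic case, to the same four alternatives (rank~$2$, full group ring, wreath product, tensor product) that you list. One remark on your anticipated ``main obstacle'': the condition $\aut(\mathcal{A}_U)^{U/L}=\aut(\mathcal{A}_{U/L})$ in Lemma~\ref{sepwr} is a non-issue here, because the only proper generalized wreath products over $C_2\times C_2$ or $C_{2p}$ have $U=L$ (the subgroup lattice of $C_{2p}$ admits no chain $e\neq L<U\neq G$), so they are ordinary wreath products and are covered by the ``in particular'' clause of Lemma~\ref{sepwr} with no surjectivity check needed.
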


\begin{proof}
The groups $C_2\times C_2$, $C_p$, and $C_4$ are separable by~\cite[Theorem 1]{Ry}, \cite[Theorem 1.3]{EP5}, and \cite[Lemma 5.5]{Ry} respectively. Suppose that $p\neq 2$. Let $H=H_1\times H_2$, where $H_1=C_2$ and $H_2=C_p$, and $\mathcal{C}$  an $S$-ring over $H$. If $\mathcal{C}$ is cyclotomic then $H_1$ and $H_2$ are $\mathcal{C}$-subgroups. Clearly, $\mathcal{C}_{H_1}=\mathbb{Z}H_1$ and hence $\mathcal{C}=\mathcal{C}_{H_1}\otimes \mathcal{C}_{H_2}$ by Statement~2 of Lemma~\ref{tenspr}. Now  applying \cite[Theorem 4.1, Theorem 4.2]{EP3} to $H$ and $\mathcal{C}$, we obtain that one of the following statements holds: (1) $\rk(\mathcal{C})=2$; (2) $\mathcal{C}=\mathbb{Z}H$; (3) $\mathcal{C}=\mathcal{C}_{H_i}\wr \mathcal{C}_{H/H_i}$, $i\in\{1,2\}$; (4) $\mathcal{C}=\mathcal{C}_{H_1}\otimes \mathcal{C}_{H_2}$. In the first and the second cases $\mathcal{C}$ is obviously separable. In the third case $\mathcal{C}$ is separable by Lemma~\ref{sepwr}. In the fourth case $\mathcal{C}$ is separable by Lemma~\ref{schurtens}. Thus $H=C_{2p}$ is separable and the lemma is proved.
\end{proof}

Let $\mathcal{A}$ be an $S$-ring over $G$. Prove that $\mathcal{A}$ is separable. If $p=2$ then $G\cong C_8$, or $G\cong C_4\times C_2$, or $G\cong C_2^3$. In the first case $\mathcal{A}$ is separable by \cite[Lemma 5.5]{Ry}. In the second case $\mathcal{A}$ is separable by \cite[Theorem 1]{Ry}. In the third case $\mathcal{A}$ is separable by Lemma~\ref{Sring0}, Lemma~\ref{cycp}, Lemma~\ref{sepwr}, and Lemma~\ref{schurtens}.

Now let $p\geq 3$. Then Lemma~\ref{Sring1} and Lemma~\ref{Sring2} yield that one of the statements of Lemma~\ref{Sring1}  holds for $\mathcal{A}$, or $\mathcal{A}=\mathcal{A}_E\otimes \mathcal{A}_P$, or $\mathcal{A}\cong_{\cay} \mathcal{A}_i(K)$ for some $K\leq \aut(P)$ and $i\in\{1,2,3\}$. If $\rk(\mathcal{A})=2$ then, obviously, $\mathcal{A}$ is separable. Suppose that Statement~2 of Lemma~\ref{Sring1} holds for $\mathcal{A}$. In this case $\mathcal{A}$ is the proper $U/L$-wreath product, where $U/L$ is an $\mathcal{A}$-section with $|U/L|\leq 2$. Check that the conditions from Lemma~\ref{sepwr} holds for $\mathcal{A}$. Lemma~\ref{cycp} implies that the $S$-rings $\mathcal{A}_U$ and $\mathcal{A}_{G/L}$ are separable.  On the one hand, $\aut(\mathcal{A}_U)^{U/L}\leq \aut(\mathcal{A}_{U/L})$. On the other hand, since $|U/L|\leq 2$, we obtain that
$$\aut(\mathcal{A}_U)^{U/L}\geq (U_{right})^{U/L}=(U/L)_{right}=\aut(\mathcal{A}_{U/L}).$$
Therefore $\aut(\mathcal{A}_U)^{U/L}=\aut(\mathcal{A}_{U/L})$ and $\mathcal{A}$ is separable by Lemma~\ref{sepwr}. If Statement~3 of Lemma~\ref{Sring1} holds for $\mathcal{A}$ or $\mathcal{A}=\mathcal{A}_E\otimes \mathcal{A}_P$ then $\mathcal{A}$ is separable by Lemma~\ref{cycp} and Lemma~\ref{schurtens}. 

It remains to consider only the case when $\mathcal{A}\cong_{\cay}\mathcal{A}_i(K)=\cyc(A(\langle \sigma_i \rangle,K,\psi),G)$ for some $K\leq \aut(P)$ and $i\in\{1,2,3\}$. In this case $\mathcal{A}_P=\cyc(K,P)$ and $\mathcal{A}_E\neq \mathbb{Z}E$. Every basic set of $\mathcal{A}_P$ has cardinality $|K|$ because $K$ acts semiregularly on $P^\#$. From~$(1)$ it follows that $|A(\langle \sigma_i \rangle,K,\psi)|=|K|$ and hence every basic set of $\mathcal{A}$ has  cardinality at most~$|K|$. 

Let $\mathcal{A}^{'}$ be an $S$-ring over an abelian group  $G^{'}$ and $\varphi:\mathcal{A}\rightarrow \mathcal{A}^{'}$  an algebraic isomorphism.

\begin{lemm}\label{cayleyisom}
$\mathcal{A}^{'}\cong_{\cay}\mathcal{A}$.
\end{lemm}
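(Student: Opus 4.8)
The plan is to show that $\mathcal{A}'$ is itself Cayley isomorphic to one of the $S$-rings $\mathcal{A}_j(K')$ produced by Lemma~\ref{Sring2}, and then to use the rigidity provided by Lemma~\ref{nonisom} to force $j=i$ and $K'=K$, whence $\mathcal{A}'\cong_{\cay}\mathcal{A}_i(K)\cong_{\cay}\mathcal{A}$. Since a Cayley isomorphism induces an algebraic one, from $\mathcal{A}\cong_{\cay}\mathcal{A}_i(K)$ together with the given algebraic isomorphism $\varphi$ I first record that $\mathcal{A}'\cong_{\alg}\mathcal{A}_i(K)$.

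First I would transport the ambient structure along $\varphi$. As $\varphi$ maps the lattice of $\mathcal{A}$-subgroups isomorphically onto that of $\mathcal{A}'$ and preserves orders, the images $E^{\varphi}$ and $P^{\varphi}$ are $\mathcal{A}'$-subgroups of orders $4$ and $p$ with $E^{\varphi}\cap P^{\varphi}=\{e\}$; since $G'$ is abelian of order $4p$, this gives $G'=E^{\varphi}\times P^{\varphi}$ with $E^{\varphi}\cong C_2\times C_2$ or $E^{\varphi}\cong C_4$. Thus $\mathcal{A}'$ falls under the analysis of Section~3, with $E^{\varphi}$ and $P^{\varphi}$ playing the roles of $E$ and $P$, and in particular Lemma~\ref{Sring2} is available for it. To apply that lemma I must verify $\mathcal{A}'\neq\mathcal{A}'_{E^{\varphi}}\otimes\mathcal{A}'_{P^{\varphi}}$, which follows by a rank count: an algebraic isomorphism preserves ranks both globally and on corresponding sections, so $\rk(\mathcal{A}')=\rk(\mathcal{A})$, $\rk(\mathcal{A}'_{E^{\varphi}})=\rk(\mathcal{A}_E)$ and $\rk(\mathcal{A}'_{P^{\varphi}})=\rk(\mathcal{A}_P)$. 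Since we are in the case $\mathcal{A}\neq\mathcal{A}_E\otimes\mathcal{A}_P$, Lemma~\ref{tenspr} gives $\rk(\mathcal{A})>\rk(\mathcal{A}_E)\rk(\mathcal{A}_P)$, and transporting this inequality yields $\rk(\mathcal{A}')>\rk(\mathcal{A}'_{E^{\varphi}})\rk(\mathcal{A}'_{P^{\varphi}})$, i.e.\ $\mathcal{A}'\neq\mathcal{A}'_{E^{\varphi}}\otimes\mathcal{A}'_{P^{\varphi}}$. Lemma~\ref{Sring2} then delivers $\mathcal{A}'\cong_{\cay}\cyc(A(\langle\sigma\rangle,K',\psi'),G')=\mathcal{A}_j(K')$ for some $j\in\{1,2,3\}$ and $K'\leq\aut(P^{\varphi})$.

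Finally I would combine the non-isomorphism results. From $\mathcal{A}_j(K')\cong_{\cay}\mathcal{A}'\cong_{\alg}\mathcal{A}_i(K)$ we obtain $\mathcal{A}_j(K')\cong_{\alg}\mathcal{A}_i(K)$. Restricting to the $P$-sections, $\varphi$ forces $|K'|=|K|$, and since $\aut$ of a group of order $p$ is cyclic its subgroups are determined by their order, so $K'$ and $K$ may be identified; equivalently this is the observation preceding Lemma~\ref{nonisom}, which rules out distinct subgroups. With $K'=K$ in hand, Lemma~\ref{nonisom} forces $j=i$. Hence $\mathcal{A}'\cong_{\cay}\mathcal{A}_i(K)\cong_{\cay}\mathcal{A}$, as claimed; note that this simultaneously yields $E^{\varphi}\cong E$ and so $G'\cong G$. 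I expect the main obstacle to be precisely the step $j=i$: the $S$-rings $\mathcal{A}_2(K)$ and $\mathcal{A}_3(K)$ have algebraically indistinguishable $E$-sections (the $S$-ring $\mathbb{Z}C_2\wr\mathbb{Z}C_2$ in both cases), so only the global structure constants can separate them and thereby distinguish $E^{\varphi}\cong C_2\times C_2$ from $E^{\varphi}\cong C_4$. This is exactly what Lemma~\ref{nonisom} supplies, so once that lemma is invoked the obstacle is already cleared.
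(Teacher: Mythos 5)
Your proposal is correct and follows essentially the same route as the paper: transport $E$ and $P$ along $\varphi$, show $\mathcal{A}'$ is not the tensor product of its restrictions so that Lemma~\ref{Sring2} applies, and then invoke Lemma~\ref{nonisom} (together with the remark preceding it that pins down $K$) to force $j=i$. The only difference is cosmetic: you rule out the tensor product by a rank count ($\rk(\mathcal{A}')>\rk(\mathcal{A}'_{E^{\varphi}})\rk(\mathcal{A}'_{P^{\varphi}})$ transported from $\mathcal{A}$), whereas the paper does it by noting that a tensor decomposition would produce a basic set of cardinality at least $2|K|$ while every basic set of $\mathcal{A}$ has cardinality at most $|K|$; both arguments are valid.
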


\begin{proof}
Clearly, $|G^{'}|=4p$, $E^{'}=E^{\varphi}$ is an $\mathcal{A}^{'}$-subgroup of order~$4$, and $P^{'}=P^{\varphi}$ is an $\mathcal{A}^{'}$-subgroup of order~$p$. From the properties of an algebraic isomorphism it follows that every basic set of $\mathcal{A}^{'}_{P^{'}}$ has cardinality $|K|$. Since  $\mathcal{A}_E\neq \mathbb{Z}E$, we have  $\mathcal{A}^{'}_{E^{'}}\neq \mathbb{Z}E^{'}$. Assume that $\mathcal{A}^{'}=\mathcal{A}^{'}_{E^{'}}\otimes\mathcal{A}^{'}_{P^{'}}$. Then  there exists  $Z^{'}\in \mathcal{S}(\mathcal{A}^{'})$ with $|Z^{'}|\geq 2|K|$ because $\mathcal{A}^{'}_{E^{'}}\neq \mathbb{Z}E^{'}$. Since $\varphi$ is an algebraic isomorphism, $Z^{'}\in \mathcal{S}(\mathcal{A}^{'})$ and $|(Z^{'})^{\varphi^{-1}}|\geq 2|K|$. We obtain a contradiction because every basic set of $\mathcal{A}$ has a cardinality at most~$|K|$. Thus $\mathcal{A}^{'}\neq\mathcal{A}_{E^{'}}\otimes\mathcal{A}_{P^{'}}$. So $\mathcal{A}^{'}\cong_{\cay}\mathcal{A}_j(K)$ for some $j\in\{1,2,3\}$ by Lemma~\ref{Sring2}. If $i\neq j$ then $\mathcal{A}^{'}\ncong_{\alg}\mathcal{A}$ by Lemma~\ref{nonisom} that contradicts to our assumption. Therefore $i=j$ and $\mathcal{A}^{'}\cong_{\cay}\mathcal{A}$. 
\end{proof}

\begin{lemm}
The algebraic isomorphism $\varphi$ is induced by a Cayley isomorphism.
\end{lemm}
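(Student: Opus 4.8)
The plan is to reduce the claim to the uniqueness principle of Lemma~\ref{uniq}, exploiting that $\mathcal{A}$ is generated by a single highest basic set. First I would fix a highest basic set $X$ of $\mathcal{A}$ (such a set exists, as noted before Lemma~\ref{generate}); by Statement~2 of Lemma~\ref{generate} we then have $\mathcal{A}=\langle \underline{X}\rangle$. Since the $S$-ring of rank~$2$ over $G$ is contained in every $S$-ring over $G$, this means precisely that $\mathcal{A}$ is generated by the rank-$2$ $S$-ring together with $\underline{X}$. Put $X^{'}=X^{\varphi}$. Because an algebraic isomorphism preserves generated subgroups, $\langle X^{'}\rangle=\langle X\rangle^{\varphi}=G^{'}$, so $X^{'}$ is a highest basic set of $\mathcal{A}^{'}$ by Statement~1 of Lemma~\ref{generate}, and $\mathcal{A}^{'}=\langle \underline{X^{'}}\rangle$.

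Next I would produce a Cayley isomorphism $f:G\to G^{'}$ with $X^f=X^{'}$. By Lemma~\ref{cayleyisom} there is a Cayley isomorphism $f_0:G\to G^{'}$ with $\mathcal{S}(\mathcal{A})^{f_0}=\mathcal{S}(\mathcal{A}^{'})$; being a group isomorphism, it satisfies $\langle X^{f_0}\rangle=\langle X\rangle^{f_0}=G^{'}$, so $X^{f_0}$ is again a highest basic set of $\mathcal{A}^{'}$. Both $X^{f_0}$ and $X^{'}$ are highest, hence their $E^{'}$-projections both equal the unique regular orbit of the relevant automorphism in $\mathcal{A}^{'}_{E^{'}}$; applying Lemma~\ref{conj} to $\mathcal{A}^{'}$, the sets $X^{f_0}$ and $X^{'}$ are rationally conjugate, say $X^{'}=(X^{f_0})^{(m)}$ with $m$ coprime to $4p$. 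By Lemma~\ref{burn} the map $\mu_m:g\mapsto g^m$ is a Cayley isomorphism from $\mathcal{A}^{'}$ onto itself, so $f:=f_0\mu_m$ (first $f_0$, then $\mu_m$) is a group isomorphism $G\to G^{'}$ with $\mathcal{S}(\mathcal{A})^f=\mathcal{S}(\mathcal{A}^{'})$ and $X^f=(X^{f_0})^{\mu_m}=X^{'}$. That is, $f$ is a Cayley isomorphism from $\mathcal{A}$ to $\mathcal{A}^{'}$ with $X^f=X^{\varphi}$.

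Finally I would identify $\varphi$ with the algebraic isomorphism $\varphi_f$ induced by $f$. Both $\varphi$ and $\varphi_f$ are algebraic isomorphisms from $\mathcal{A}$ to $\mathcal{A}^{'}$; their restrictions to the rank-$2$ subrings each coincide with the unique algebraic isomorphism between the rank-$2$ $S$-rings over $G$ and $G^{'}$, and both send $\underline{X}$ to $\underline{X^{'}}$. Since $\mathcal{A}=\langle \underline{X}\rangle$ and $\mathcal{A}^{'}=\langle \underline{X^{'}}\rangle$, Lemma~\ref{uniq} forces $\varphi_f=\varphi$. Hence $\varphi$ is induced by the Cayley isomorphism $f$, as required.

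The only genuinely delicate point is the middle step: one must be certain that the two candidate highest basic sets $X^{f_0}$ and $X^{'}$ of $\mathcal{A}^{'}$ have equal $E^{'}$-projections, so that Lemma~\ref{conj} can be invoked to replace $f_0$ by a rationally conjugate Cayley isomorphism. Everything else reduces to quoting the single-generator structure from Lemma~\ref{generate} and the uniqueness principle of Lemma~\ref{uniq}; the verification that $\varphi$ and $\varphi_f$ restrict to the same algebraic isomorphism of the rank-$2$ subrings is routine, since there is exactly one algebraic isomorphism between rank-$2$ $S$-rings.
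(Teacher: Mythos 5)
Your proposal is correct and follows essentially the same route as the paper: take a highest basic set $X$, use Lemma~\ref{cayleyisom} to get some Cayley isomorphism, adjust it by a power map (Lemmas~\ref{conj} and~\ref{burn}) so that it sends $X$ to $X^{\varphi}$, and conclude by the single-generator property of Lemma~\ref{generate} together with the uniqueness of Lemma~\ref{uniq}. The point you flag as delicate is immediate from the definition of a highest basic set, since both projections to $E^{'}$ equal the unique regular orbit $O^{\varphi}$.
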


\begin{proof}
Lemma~\ref{cayleyisom} implies that there exists a Cayley isomorphism $f$ from $\mathcal{A}$ to $\mathcal{A}^{'}$. Let  $X\in \mathcal{S}(\mathcal{A})$ be a highest basic set. Then $\langle X \rangle=G$ by Statement~1 of Lemma~\ref{generate}. So $\langle X^{\varphi} \rangle=G^{'}$ and $\langle X^f \rangle=G^{'}$ by the properties of an algebraic isomorphism. Due to Statement~1 of Lemma~\ref{generate}, the sets $X^{\varphi}$ and $X^f$ are highest basic sets of $\mathcal{A}^{'}$. Lemma~\ref{conj} yields that $X^{\varphi}$ and $X^f$ are rationally conjugate. Therefore there exists a Cayley isomorphism $f_1$ from $\mathcal{A}^{'}$ onto itself such that $X^{ff_1}=X^{\varphi}$. The Cayley isomorphism $ff_1$ from $\mathcal{A}$ to $\mathcal{A}^{'}$ induces the algebraic isomorphism $\varphi_{ff_1}$ and $X^{\varphi_{ff_1}}=X^{ff_1}=X^{\varphi}$.  Note that $\mathcal{A}=\langle \underline{X} \rangle$ and $\mathcal{A}^{'}=\langle \underline{X}^{\varphi} \rangle$ by Statement~2 of Lemma~\ref{generate}. Thus $\varphi=\varphi_{ff_1}$ by Lemma~\ref{uniq}.
\end{proof}

We proved that if $\mathcal{A}\cong_{\cay}\mathcal{A}_i(K)$ for some $K\leq \aut(P)$ and $i\in\{1,2,3\}$ then every algebraic isomorphism of $\mathcal{A}$ is induced by a Cayley isomorphism. So $\mathcal{A}$ is separable in this case and the proof of Theorem~\ref{main} is complete.

\newpage

\end{document}